\newtheorem{theorem}{Theorem}[section]
\newtheorem*{theorem*}{Theorem}
\newtheorem{lemma}[theorem]{Lemma}
\newtheorem{corollary}[theorem]{Corollary}
\newtheorem*{question*}{Question}
\theoremstyle{definition}
\newtheorem{example}[theorem]{Example}
\newtheorem{remark}[theorem]{Remark}
\newtheorem*{remark*}{Remark}
\newtheorem*{remarks*}{Remarks}
\newtheorem*{corollary*}{Corollary}
\numberwithin{figure}{section}
\numberwithin{table}{section}
\numberwithin{equation}{section}
\newcommand{\la}{\langle}
\newcommand{\ra}{\rangle}
\newcommand{\OP}[1]{\operatorname{#1}}
\newcommand{\Aut}{\operatorname{Aut}}
\newcommand{\SAut}{\operatorname{SAut}}
\newcommand{\Epi}{\operatorname{Epi}}
\newcommand{\F}{\operatorname{F}}
\newcommand{\A}{\operatorname{A}}
\newcommand{\Adj}{\operatorname{Adj}}
\newcommand{\Sq}{\operatorname{Sq}}
\newcommand{\Opp}{\operatorname{Opp}}
\newcommand{\E}{\operatorname{E}}
\newcommand{\SL}{\operatorname{SL}}
\newcommand{\myast}{\hspace{-1pt} \ast \hspace{-1pt}}
\newcommand{\BB}[1]{{\mathbb #1}}
\newcommand{\FP}{\mathbb{Z} \myast \F_k}
\title[Sampling elements of a finite group]{sampling elements of a finite group: efficiency of the product replacement algorithm with an  accumulator.}
\begin{document}

\author{Michał Marcinkowski}
\address{Department of Mathematics, University of Wrocław, Poland}
\email{michal.marcinkowski@math.uni.wroc.pl}
\author{Piotr Mizerka}
\address{Faculty of Mathematics and Computer Science, Adam Mickiewicz Univeristy in Poznań, Poland}
\email{piotr.mizerka@amu.edu.pl}

\begin{abstract}
Let $G$ be a finite group generated by $k$ elements. The well-known product replacement algorithm provides an effective method for sampling generating sets of $G$. We study a refinement of this algorithm that is designed to output individual elements of $G$. We show that after $O(k^2\log|G|)$ steps, the distribution of the output is close to uniform on~$G$, which improves upon the best results known to date. The proof proceeds via spectral gap estimates and uses computer assisted calculations.
\end{abstract}

\maketitle

\section{Introduction}

Let $G$ be a finite group. We are interested in an efficient algorithm that samples elements of $G$ from a nearly uniform distribution. Such algorithms play a central role in computational group theory, as many other algorithms require access to random group elements \cite{10.1145/96877.96918, 10.1112/plms/s3-65.3.555}. Often one deals with finite groups that are astronomically large, e.g., the Rubik's cube group has an order about $10^{19}$. Thus one cannot simply generate the list of elements and choose one at random, as generating such a list would require prohibitive time and memory.  We therefore need some additional information on $G$ to use in the sampling procedure. It is not a restrictive assumption to suppose that, together with $G$, we are given a generating tuple $S$ (in this context it is more convenient to use tuples rather than generating sets). We can then multiply the generators in various ways and thereby produce elements of $G$ as output.   

To this end, we need to assume that we have an efficient procedure for inverting and multiplying elements of $G$, as well as an efficient way to represent the results of these operations on a computer. Such groups are called  ``black-box'' groups, since when describing particular algorithms operating on them, we will not be concerned with the details of how these operations are implemented. Subgroups of permutation groups or matrix groups over finite rings are among the most important examples of black-box groups.

The simplest method to sample an element from $G$ given its generating tuple $S$, is to perform a lazy random walk on the Cayley graph of $(G,S \cup S^{-1})$ starting from the identity. By the standard theory, the distribution of this random walk converges to the uniform distribution. However, even for very simple groups such as $\BB Z_n$, the number of steps required in order to be close to the uniform distribution can make this approach intractable if $n$ is large. 

There exist algorithms that are much more efficient than a random walk on a Cayley graph. One of the most important techniques for designing such samplers is based on the product replacement algorithm. This algorithm was first introduced in \cite{MR1356111} and is again based on a random walk, but this time not on the Cayley graph, but rather on the graph of all generating $k$-tuples, where $k = |S|$. Thus the algorithm first outputs a generating tuple, and then one can take the first element of the tuple as the output element of $G$. It was reported that this technique works well in practice and passes statistical tests. However, in general the distribution of this random element does not converge to the uniform distribution, see \cite{MR2053017} or \Cref{e:Z^k}. For a survey of the product replacement algorithm and related problems see \cite{MR1829489}.

In order to overcome this difficulty, variants of the product replacement algorithm were proposed \cite{MR1929718}. In particular, in the ``rattle'' version it is clear that the limiting distribution is uniform, but the rate of convergence remained unknown.

It was observed that the behavior of the original product replacement algorithm, which samples a generating $k$-tuple, is related to Kazhdan's property (T) for $\SAut(\F_k)$ \cite{MR1815215}. The question whether $\SAut(\F_k)$ has property (T) was a longstanding question and was answered affirmatively in the breakthrough papers \cite{MR4224715, MR4023374} for $k>4$, and in \cite{nitsche2022computerproofspropertyt} the remaining case of $\SAut(\F_4)$ was settled. Even more importantly, very good estimates were obtained for the corresponding spectral gaps. From those estimates it immediately follows that the mixing rate of the original product replacement algorithm is of order $O(k^2\log|G|)$. This means that after $O(k^2\log|G|)$ steps the algorithm outputs a generating tuple from a distribution that is close to uniform.

The problem of finding good estimates for product replacement algorithms that sample an element from $G$ has remained open. A step towards such estimates was made in \cite{MR4709065}, where the ``rattle'' version was analyzed. However, it is assumed there that $k$ is of order $\log|G|$, which can be prohibitively large when $G$ is large. 

In this paper we focus on a variant of the product replacement algorithm that is a slight modification of the ``rattle'' and is easier to analyze from the point of view of group actions. We call it the \textit{product replacement accumulator algorithm}. It is based on a random walk on the space of generating tuples extended by one additional element - the accumulator. After~$t$ steps of the random walk, the accumulator is returned. Our main theorem is the following.

\begin{theorem*}[\Cref{t:rate-of-convergence}]
Suppose that $G$ is a finite group equipped with a generating $k$-tuple, with $k>5$, and let $U_G$ denote the uniform distribution on $G$. Let $\nu_t$ be the distribution of the random element returned by the product replacement accumulator algorithm after $t$ steps. Then

$$
\|\nu_t - U_G\|_{\mathrm{tv}} < \epsilon \text{ for } t \geq \frac{23k^2}{k-5}((k+1)\log|G| + \log(\epsilon^{-1})),
$$

where $\|\!\cdot\!\|_{\mathrm{tv}}$ is the total variation distance and $\log$ is the natural logarithm. 
\end{theorem*}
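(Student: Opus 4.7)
The plan is to realize the accumulator walk as a random walk on the state space $\Omega$ of generating-tuples-with-accumulator driven by a group $\Gamma$ acting by permutations, import a quantitative Kazhdan inequality for $\Gamma$ via a computer-assisted sum-of-squares certificate, and then convert the resulting spectral gap into a total variation bound that is pushed forward onto the accumulator coordinate.

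First, I would set up $\Omega \subset G^k \times G$ as the subset of tuples $(\mathbf{g},a)$ with $\mathbf{g}$ a generating $k$-tuple of $G$, together with a symmetric multiset $T$ of elements of a subgroup $\Gamma \leq \SAut(\FP)$, chosen so that the one-step transition kernel of the algorithm is $P = \frac{1}{|T|}\sum_{s \in T}\rho(s)$, where $\rho$ denotes the permutation representation of $\Gamma$ on $\ell^2(\Omega)$. Symmetry of $T$ makes $P$ self-adjoint with the constant function invariant, and a separate check exploiting the symmetry $a \mapsto ga$ (which commutes with every element of $T$ and is therefore $\rho$-equivariant) shows that the stationary distribution $\pi$ has uniform marginal on the accumulator coordinate. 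One must also confirm that each move in $T$ preserves the generating property, so that the walk is genuinely supported in $\Omega$.

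Second, I would invoke a quantitative property (T) estimate for $(\Gamma, T)$: a sum-of-squares certificate in $\BB R[\Gamma]$ witnessing $\Delta_T^2 \geq \lambda\,\Delta_T$ with $\lambda \geq \frac{k-5}{23k^2}$ for $k>5$, where $\Delta_T = I - P$. Applied through $\rho$ this gives $\|P|_{\mathbf{1}^{\perp}}\| \leq 1-\lambda$. Combined with the spectral decomposition of $P$ and $\pi(x) = 1/|\Omega|$, this yields the pointwise bound $|P^t(x_0, y) - \pi(y)| \leq (1-\lambda)^t$, hence
$$
\|\nu^\Omega_t - \pi\|_{\mathrm{tv}} \leq \tfrac{1}{2}|\Omega|(1-\lambda)^t \leq \tfrac{1}{2}|G|^{k+1}e^{-\lambda t}.
$$
Since TV distance does not increase under the pushforward $\Omega \to G$ and the pushforward of $\pi$ is $U_G$, the same bound holds for $\|\nu_t - U_G\|_{\mathrm{tv}}$. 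Solving $\tfrac{1}{2}|G|^{k+1}e^{-\lambda t} < \epsilon$ for $t$ and substituting $\lambda \geq \frac{k-5}{23k^2}$ produces the stated inequality after discarding the harmless $-\log 2$.

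The principal obstacle is the quantitative inequality $\Delta_T^2 \geq \lambda\,\Delta_T$ with the explicit constant $\lambda \geq \frac{k-5}{23k^2}$. Qualitative property (T) for $\SAut(\F_k)$ and its relatives is known, but extracting the precise $k$-dependent spectral gap requires constructing a sum-of-squares decomposition in $\BB R[\Gamma]$ numerically, reducing the verification to a finite-dimensional semidefinite program, and rigorously controlling floating-point errors; this is the step that both forces the hypothesis $k>5$ and pins down the numerical coefficient $23$. A secondary technicality is the precise identification of $\Gamma$ and $T$ inside $\SAut(\FP)$ so that the permutation action on $\Omega$ faithfully encodes the algorithm, combined with an orbit analysis compatible with the $G$-symmetry on the accumulator, so that the marginal of $\pi$ is genuinely $U_G$ and no additional orbit-averaging correction is needed.
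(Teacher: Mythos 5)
Your overall strategy --- realizing the algorithm as a random walk on a Schreier graph of a subgroup of $\SAut(\FP)$, certifying $\Delta^2 \geq \lambda\Delta$ by a computer-assisted sum-of-squares decomposition, converting the resulting spectral gap into a pointwise and then total-variation bound, and pushing forward along the accumulator projection using $|\Omega|<|G|^{k+1}$ --- is exactly the paper's. (The paper's group is $\A_k\leq\SAut(\FP)$, generated by the $4k^2$ Nielsen moves $L^{\pm}_{ij},R^{\pm}_{ij}$ where $i$ may be the accumulator index $0$; the certificate is $\Delta_K^2-0.35(K-5)\Delta_K\geq 0$, obtained by symmetrizing a computer-verified inequality in $\BB R[\A_5]$, which is also where the threshold $k>5$ and the constant come from.)

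There is, however, one step that fails as written: you deduce $\|P|_{\mathbf 1^{\perp}}\|\leq 1-\lambda$ from $\Delta_T^2\geq\lambda\Delta_T$ with $\Delta_T=I-P$. That inequality only forces the spectrum of $\Delta_T$ on $\mathbf 1^{\perp}$ into $[\lambda,2]$, i.e., it bounds the eigenvalues of $P$ from \emph{above} by $1-\lambda$; it gives no control on eigenvalues of $P$ near $-1$, so the operator norm of $P$ on $\mathbf 1^{\perp}$ can still equal $1$, and the claimed pointwise decay $|P^t(x_0,y)-\pi(y)|\leq(1-\lambda)^t$ is false in general (e.g.\ for a bipartite Schreier component). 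The paper avoids this by defining the algorithm as a \emph{lazy} walk, whose kernel $\tfrac12(I+P)$ has spectrum in $[0,1]$, and by invoking the standard mixing bound for lazy walks; the resulting factor of $2$ in the relaxation time is precisely what turns the certified normalized gap $\frac{0.35(k-5)}{4k^2}$ into the constant $\frac{8k^2}{0.35(k-5)}\leq\frac{23k^2}{k-5}$ of the statement. Your argument can be repaired by putting the identity into $T$ with multiplicity $|T|$ (equivalently, tracking the laziness explicitly), but as stated the passage from the sum-of-squares certificate to geometric decay of $P^t$ has a genuine hole. A smaller point: since the accumulator is multiplied by generators on both sides, neither $a\mapsto ga$ nor $a\mapsto ag$ commutes with all moves; the uniformity of the accumulator marginal of $\pi$ is instead obtained by noting that the stationary distribution on the connected regular Schreier component is uniform and that all fibers of the accumulator projection have the same cardinality $|\Gamma'_k(G)|$.
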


This result should be interpreted as follows: we need to make about $\frac{23k^2}{k-5}(k\log|G| + 1)$ steps to ensure that $\|\nu_t - U_G\|_{\mathrm{tv}} < e^{-1}$, where $e$ denotes Euler’s number. This is the most expensive part. From then on, every time we make an additional $\frac{23k^2}{k-5}$ steps, we are guaranteed that $\nu_t$ is $e$-times closer to the uniform distribution. 

We want to highlight, that one of the most interesting aspects of \Cref{t:rate-of-convergence}, as well as \cite[Section 5.2]{MR4224715}, is that the bound on $t$ depends only on $|G|$. It does not depend at all on the more intrinsic structure of $G$.

The estimates in \Cref{t:rate-of-convergence} are virtually the same as those for the mixing time of the original product replacement algorithm obtained in \cite{MR4224715}, so passing from sampling a generating tuple to sampling an element of $G$ does not incur much additional overhead. We want to stress that the product replacement accumulator algorithm is very simple: in one step it performs only one multiplication and at most one inversion of group elements. There are no other computations involved, so what matters is only how many steps of the algorithm one needs to perform and the cost of group operations in $G$. Moreover, the required memory is very modest: one needs to store only $k+1$ elements of $G$.

It is possible that the convergence of the product replacement algorithm and its different variants is much faster than the estimates obtained so far. However, better asymptotic bounds cannot be obtained by a classical analysis of the spectral gap, see \Cref{r:sharp}.

Finally, let us very briefly compare the product replacement accumulator algorithm to other sampling methods. In the following asymptotic bounds we assume that $k \leq \log|G|$. The Schreier-Sims algorithm \cite{MR1112272, Seress_2003}, among other applications, can be used to sample an element from a subgroup $G$ of the permutation group $S_n$. It has a costly preprocessing phase requiring in the known implementations at least $\Theta(n^2 \log^3 |G|)$ time, but once this phase is completed, one can sample \textit{uniformly} from $G$ in time $\Theta(\log|G|)$. This algorithm requires storing at least $\Theta(n\log|G|)$ elements of $G$ \cite[Theorem 4.2.4]{Seress_2003}. Thus, even for subgroups of permutation groups, our algorithm may be a better choice if one needs to sample only a few elements from $G$. Another very interesting family of algorithms is based on the random subproduct technique \cite{10.1145/103418.103440}. An efficient version was proposed by Cooperman and its correctness was proved in \cite{Dixon2008}. It requires $\Theta(\log^2 |G|)$ steps in the preprocessing phase, after which one can sample in time $\Theta(\log|G|)$ from an almost uniform distribution. The drawback of this algorithm is that the preprocessing phase may fail (although the probability of failure is very small), and it requires storing $\Theta(\log|G|)$ elements of $G$. In terms of time efficiency, the product replacement accumulator algorithm is guaranteed to be better than the Cooperman algorithm in situations where $k^2 < \log|G|$ and the number of elements to be sampled is small. Additionally, the hidden constants neglected in the asymptotic estimates can potentially be large in the Cooperman algorithm.

\textbf{Outline}. In \Cref{s:pra} we recall the original product replacement algorithm. In \Cref{s:random-walk} we define $\Sigma'_k(G)$, the graph of generating $k$-tuples of $G$ with an accumulator. We then define the group $\A_k$ such that, for any $G$, the graph $\Sigma'_k(G)$ is a Schreier graph of $\A_k$. In \Cref{s:praa} we define the product replacement accumulator algorithm and show that the distribution of the accumulator converges to the uniform distribution on $G$. In \Cref{s:main-result} we prove the main result. There, combining computer-assisted and human-made calculations, we show that $\Delta^2_k - 0.35(K-5)\Delta_k$ is a sum of squares in the group ring $\BB R[\A_k]$. \Cref{t:rate-of-convergence} follows immediately from this observation. For completeness, the details are given in \Cref{ss:convergence}. In \Cref{s:semidirect} we show that $\A_k$ is a semidirect-product and observe that property (T) for $\A_k$ follows trivially from \cite[Proposition 10]{MR4023374}. However, this gives significantly weaker bounds on the spectral gaps. Finally, in \Cref{s:computer-calc} we provide an overview of the computer methods used.

\textbf{Acknowledgments}. We thank Marek Kaluba for his help in implementing the code used for computer calculations. We would like to thank Adam Mickiewicz University for providing access to the cluster infrastructure of the Faculty of Mathematics and Computer Science.

\section{Product replacement algorithm}\label{s:pra}

Let $\F_k = \la x_1, \ldots, x_k \ra$ be the free group of rank $k$. Denote by $\Aut(\F_k)$ the automorphism group of $\F_k$. Note that an automorphism is defined by its images on the generators $x_1, \ldots, x_k$. We shall use the notation $\psi(g_1,\ldots,g_s) = (\psi(g_1),\ldots,\psi(g_s))$ for $\psi \in \Aut(\F_k)$ and $(g_1,\ldots,g_s) \in \F_k^s$.
The group $\Aut(\F_k)$ is finitely generated and contains a subgroup of index~$2$, denoted $\SAut(\F_k)$, which is generated by the following set of Nielsen automorphisms:

$$
R_{ij}^\pm(x_1,\ldots,x_k) = (x_1,\ldots,x_ix_j^\pm,\ldots,x_k)
$$
$$
L_{ij}^\pm(x_1,\ldots,x_k) = (x_1,\ldots,x_j^\pm x_i,\ldots,x_k).
$$

That is, to the $i$-th generator we multiply the $j$-th generator or its inverse from the left or the right.

Let $G$ be a finite group and $k \in \mathbb{N}$.
Denote by $\Epi(\F_k,G)$ the set of all epimorphisms from $\F_k$ to $G$. On $\Epi(\F_k,G)$ we have a left $\SAut(\F_k)$-action by precomposition:
$$\psi.e(x) = e(\psi^{-1}(x))$$ 
for $\psi \in \SAut(\F_k)$, $e \in \Epi(\F_k,G)$ and $x \in \F_k$.
Let \marginnote{$\Gamma_k(G)$} $\Gamma_k(G)$ be the Schreier graph corresponding to this action and the generating set 
$$N_k = \{ R^\pm_{ij}, L^\pm_{ij} \colon 1 \leq i,j \leq k \}.$$
The vertices of $\Gamma_k(G)$ are elements of $\Epi(\F_k,G)$ and there is a directed labeled edge $e_1 \xlongrightarrow{s} e_2$ if $e_2 = s.e_1$ for $s \in N_k$. There can be multiple edges between vertices of $\Gamma_k(G)$ as well as loops. The graph is regular of out-degree and in-degree equal to $4k(k-1)$.

The description above of $\Gamma_k(G)$ is convenient, as it carries a clear action of $\SAut(\F_k)$.
Let us now give a different description, which is more natural from the point of view of the product replacement algorithm. 

Note that each epimorphism $e \in \Gamma_k(G)$ is defined by its values on the generators $e(x_i)$, and that $(e(x_1),\ldots,e(x_k))$ is a generating set of $G$.
The assignment $e \mapsto (e(x_1),\ldots,e(x_k))$ gives a bijective correspondence between the vertices of $\Gamma_k(G)$ and generating $k$-tuples of $G$. Thus

$$
\Epi(\F_k,G) = \{ (g_1,\ldots,g_k) \colon \langle g_1,\ldots,g_k \rangle = G \}.
$$

The elements $g_i$ are called generators. Two generating $k$-tuples are connected if one can move from one to another by multiplying the $j$-th generator or its inverse to the $i$-th generator from the left or right. For example

$$
(g_1,\ldots,g_k) \xlongrightarrow{L_{ij}} (g_1,\ldots,g_j^{-1}g_i,\ldots,g_k).
$$

The graph $\Gamma_k(G)$ is not necessarily connected.
In what follows we always fix an initial generating $k$-tuple $S_0$ (or equivalently an initial epimorphism $e_0$) and consider \marginnote{$\Gamma_k'(G)$} $\Gamma_k'(G)$, the connected component of $\Gamma_k(G)$ containing $S_0$ (for simplicity we omit $S_0$ from notation). The set of vertices of $\Gamma'_k(G)$ is the orbit of $S_0$ under the $\SAut(\F_k)$-action. The \textit{product replacement algorithm} is defined to be a lazy random walk on $\Gamma'_k(G)$ starting at~$S_0$. As is the case for any lazy random walk, it converges to the uniform distribution. 

\begin{remark} We prefer to use lazy random walks in this paper, since the estimates we use in \Cref{t:rate-of-convergence} work only for lazy random walks. Note, however, that $\SAut(F_k)$ has relations of length $2$, namely $L_{ij}L^{-1}_{ij}$, and length $5$, namely $[L^{-1}_{kj},L_{ji}] = L_{ki}$. It follows that $\Gamma'_k(G)$ has loops of length $2$ and $5$ starting from any vertex. Thus $\Gamma'_k(G)$ is aperiodic, and therefore a simple random walk on $\Gamma'_k(G)$ as well converges to the uniform distribution. 
\end{remark}

\begin{example}\label{e:Z^k}
Let $e \colon \F_k \to \BB Z_n^k$, $n \in \BB N$, be the abelianisation modulo $n$, or equivalently $S_0 =(v_1,\ldots,v_k)$ where $v_i \in \BB Z_n^k$ are versors. Then the vertices of $\Gamma'_k(\BB Z_n^k)$ are positive bases of $\BB Z_n^k$, or equivalently elements of $\SL_k(\BB Z_n)$. Members of such bases (called base vectors) are exactly vectors $x = (x_1,\ldots,x_k)$ such that $\OP{gcd}(x_1,\ldots,x_k,n)=1$. The (lazy) random walk on $\Gamma'_k(\BB Z_n^k)$ converges to the uniform distribution on the vertices of $\Gamma'_k(\BB Z_n^k)$. However, if we output, say, the first coordinate of a generating tuple given by the random walk, the distribution of such an element will not converge to the uniform distribution on $\BB Z_n^k$. One can show, that it converges to the uniform distribution on the base vectors of $\BB Z_n^k$.
\end{example}

\begin{example}\label{e:characteristic}
This is a generalization of the above example. Let $e \colon \F_k \to G$ be an epimorphism such that $\ker(e)$ is $\SAut(\F_k)$-invariant. Let $\Gamma_k'(G)$ be the connected component of $e$. Such an epimorphism induces the map $\bar{e} \colon \SAut(\F_k) \to \Aut(G)$ by 
$$\bar{e}(\psi)(e(x)) = e(\psi(x)).$$

Moreover, we have a natural action of $\Aut(G)$ on $\Epi(\F_k,G)$ by composition: if $\bar{\psi} \in \Aut(G)$ and $f \in \Epi(\F_k,G)$ then $\bar{\psi}.f = \bar{\psi} \circ f$. On the vertices of $\Gamma_k'(G)$, the $\SAut(\F_k)$-action factors through the $\Aut(G)$-action via $\bar{e}$ composed with the inverse anti-homomorphism of $\SAut(\F_k)$. Namely, we have:
$$\psi.f = \bar{e}(\psi^{-1}).f \text{ for } \psi \in \SAut(\F_k) \text{ and } f \in \Gamma'_k(G).$$ 

It means, that the vertices of $\Gamma'_k(G)$ are in the orbit of $e$ under the $\bar{e}(\SAut(\F_k))$-action. This action is clearly faithful. Therefore $\Gamma_k'(G)$ with inverted labels on the edges ($L_{ij}$ becomes $L^{-1}_{ij}$ and so forth), is isomorphic to the Cayley graph of $\bar{e}(\SAut(\F_k))$ with the generating tuple $\bar{e}(N_k)$. By a base element of $G$ we mean any element that occurs in a generating $k$-tuple in $\Gamma'_k(G)$ (note that the notion of a base element possibly depends on $e$). One can prove, using the transitivity of $\bar{e}(\SAut(\F_k))$, that the distribution of the first element of a tuple given by the random walk on $\Gamma_k'(G)$ converges to the uniform distribution on the base elements. 
\end{example}

\section{Random walk in a randomly changing Cayley graph}\label{s:random-walk}

Let $\FP$ be the free group of rank $k+1$ where $\BB Z = \la x_0 \ra$, and $\F_k = \la x_1,\ldots, x_k \ra$. Consider the subset $\OP E_k(G)$ of $\Epi(\FP,G)$ consisting of these epimorphisms that are onto already on the subgroup $\F_k$. That is, for every $e \in \OP E_k(G)$, the element $e(x_0)$ is arbitrary and elements $e(x_1), \ldots, e(x_k)$ generate $G$. Thus $\OP E_k(G)$ has the structure of a product set $G \times \Epi(\F_k,G)$. We can also interpret $\OP E_k(G)$ as a set of tuples:

$$
\OP E_k(G) = \{ (g_0 | g_1,\ldots,g_k) \colon g_0 \in G, \la g_1,\ldots,g_k \ra = G \}. 
$$

We write $(g_0 | g_1, \ldots,g_k)$ instead of $(g_0, g_1,\ldots,g_k)$ to underline the different roles of the last $k$ and the first element. The first element $g_0$ of such a tuple is called an accumulator. The last $k$ elements are called generators.

Let \marginnote{$\OP A_k$} $\OP A_k$ be the subgroup of $\SAut(\FP)$ generated by the following two types of automorphisms.

\begin{itemize}[itemindent=4em]
\item[\textbf{$C$-generators:}] $L^{\pm}_{\mathrm{0}i}$ and $R^{\pm}_{\mathrm{0}i}$ for $i=1,\ldots,k$.
\item[\textbf{$N$-generators:}] $L^{\pm}_{ij}$ and $R^{\pm}_{ij}$ for $i,j \in 1,\ldots,k$.
\end{itemize}

We have $4k$ $C$-generators and $4k(k-1)$ $N$-generators. Altogether $\A_k$ is generated by $4k^2$ elements. 

\begin{lemma}
The group $\A_k$ acts on $\E_k(G)$.
\end{lemma}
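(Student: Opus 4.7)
The plan is to obtain the $\A_k$-action as the restriction of the natural precomposition action of the ambient group $\SAut(\FP)$ on $\Epi(\FP,G)$. Since $\A_k \leq \SAut(\FP)$, the formula $\psi.e(x) = e(\psi^{-1}(x))$ automatically gives a map $\A_k \times \E_k(G) \to \Epi(\FP,G)$, and the only thing to check is that this map lands inside the subset $\E_k(G)$. In other words, for every $\psi \in \A_k$ and every $e$ whose restriction $e|_{\F_k}$ is surjective onto $G$, I must verify that $(\psi.e)|_{\F_k}$ is still surjective.

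The key observation is that $(\psi.e)(\F_k) = e(\psi^{-1}(\F_k))$, so it suffices to show that every $\psi \in \A_k$ carries the subgroup $\F_k \leq \FP$ onto itself. The collection of automorphisms of $\FP$ preserving $\F_k$ setwise is itself a subgroup of $\SAut(\FP)$, so I only need to verify the condition on the generators of $\A_k$. Each $C$-generator $L^{\pm}_{0i}, R^{\pm}_{0i}$ modifies only $x_0$ and fixes $x_1,\ldots,x_k$ pointwise, hence fixes $\F_k$ elementwise. Each $N$-generator $L^{\pm}_{ij}, R^{\pm}_{ij}$ with $i,j \in \{1,\ldots,k\}$ fixes $x_0$ and restricts to a classical Nielsen automorphism of $\F_k = \langle x_1,\ldots,x_k\rangle$, and hence sends $\F_k$ bijectively to itself.

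Combining these, every $\psi \in \A_k$ satisfies $\psi(\F_k) = \F_k$, and therefore $\psi^{-1}(\F_k) = \F_k$ as well. It then follows that $(\psi.e)(\F_k) = e(\psi^{-1}(\F_k)) = e(\F_k) = G$, so $\psi.e \in \E_k(G)$, which is what we need. No serious obstacle is anticipated: the statement is essentially a bookkeeping confirmation that the $C$- and $N$-generators were designed precisely to preserve the subgroup $\F_k$, hence the ``surjective on the last $k$ coordinates'' property that defines $\E_k(G)$.
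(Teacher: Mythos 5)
Your proof is correct and follows essentially the same route as the paper: both arguments reduce to checking the generators of $\A_k$, observing that $C$-generators only modify the accumulator coordinate while $N$-generators act as Nielsen transformations on the last $k$ coordinates. You phrase this in the language of epimorphisms (each generator preserves the subgroup $\F_k \leq \FP$, so $(\psi.e)|_{\F_k}$ stays surjective), whereas the paper phrases it directly in tuple language, but these are the same verification under the identification $\E_k(G) = G \times \Epi(\F_k,G)$.
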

\begin{proof}
If $(g_0 | g_1,\ldots,g_k) \in \E_k(G)$, then multiplying some $g_i^{\pm}$ where $i=1,\ldots,k$ to $g_0$, leads to an element in $\E_k(G)$. Thus $L^{\pm}_{0i}$ and $R^{\pm}_{0i}$ for $i=1,\ldots,k$ act on $\E_k(G)$. 
One can also perform Nielsen transformations within last $k$ elements. Thus $L^{\pm}_{ij}$ and $R^{\pm}_{ij}$ for $i,j \in 1,\ldots,k$ act on $\E_k(G)$. 
\end{proof}

Let \marginnote{$\Sigma_k(G)$} $\Sigma_k(G)$ be the Schreier graph defined by the action of $\A_k$ on $\E_k(G)$ and the generating set of $\A_k$ consisting of $C$-generators and $N$-generators. 
The edges labeled by $C$-generators are called $C$-edges. The same convention is used for $N$-generators and $N$-edges. 

The graph $\Sigma_k(G)$ comes equipped with a natural subgraph structure reflecting the product decomposition $G \times \Epi(\F_k,G)$ and the distinction between $C$-edges and $N$-edges. Let us describe this decomposition more precisely. Let the projections:
\begin{align*}
\pi_C &\colon \E_k(G) \to G\\
\pi_N &\colon \E_k(G) \to \Epi(\F_k,G)
\end{align*}
be given by 
\begin{align*} 
\pi_C(g_0 | g_1,\ldots,g_k) = & g_0\\
\pi_N(g_0 | g_1,\ldots,g_k) = & (g_1,\ldots,g_k)
\end{align*}

Note that $\pi_N$ induces a map of graphs $\pi_N \colon \Sigma_k(G) \to \Gamma_k(G)$ in the following sense: $N$-edges are sent to the edges of $\Gamma_k(G)$ with their labels preserved, whereas each $C$-edge $e_1 \xlongrightarrow{s} e_2$ is collapsed to the vertex $\pi_N(e_1)=\pi_N(e_2) \in \Gamma_k(G)$.

Let $S = (g_1,\ldots,g_k) \in \Gamma_k(G)$ be a generating $k$-tuple of $G$. 
The directed left-right Cayley graph $Cay(S,G,S)$ is defined as follows: 

\begin{enumerate}
\item
The vertices of $Cay(S,G,S)$ are the elements of $G$.
\item There is a labeled edge $v_1 \xlongrightarrow{L^{\pm}_i} v_2$ if $v_2 = g_i^{\pm}v_1$ and  $v_1 \xlongrightarrow{R^{\pm}_i} v_2$ if $v_2 = v_1g_i^{\pm}$ for $1 \leq i \leq k$.
\end{enumerate}

Note that here $S$ is not a generating set, but a generating $k$-tuple. In particular, generators $g_i$ may be repeated or can be equal the identity of $G$.
Thus $Cay(S,G,S)$ can have multiple edges between vertices and loops. This is why we label edges not by generators but by their indices in $S$. Moreover, the labeling we use corresponds to $C$-labels in $\Sigma_k(G)$.

\begin{lemma}
Let $S \in \Gamma_k(G)$. Consider the subset of vertices 
$$\pi_N^{-1}(S) = \{(g|S) \colon g \in G \} \subset \Sigma_k(G).$$
The subgraph of $\Sigma_k(G)$ spanned on $\pi_N^{-1}(S)$ by $C$-edges is isomorphic to $Cay(S,G,S)$. The labeling is preserved in the sense that edges labeled by $C$-generators $R^{\pm}_{0i}$ or $L^{\pm}_{0i}$ correspond to edges labeled by $R^{\mp}_i$ or $L^{\mp}_i$ in $Cay(S,G,S)$.
\end{lemma}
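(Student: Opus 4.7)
The plan is to exhibit an explicit graph isomorphism. On vertices, the candidate map is the obvious one: $\phi \colon \pi_N^{-1}(S) \to G$ given by $\phi(g|S) = g$. This is a bijection by the very description of $\pi_N^{-1}(S) = \{(g|S) : g \in G\}$, so the only real content is to check that the $C$-edges transport correctly under $\phi$ and to get the sign convention right.

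First I would observe that each $C$-generator fixes the pointwise coordinates $x_1,\ldots,x_k$ of $\FP$, so it acts on a tuple $(g_0 | g_1,\ldots,g_k) \in \OP E_k(G)$ only by modifying the accumulator $g_0$. Hence the $C$-edges stay inside $\pi_N^{-1}(S)$, so the subgraph in question is well-defined. Next I would compute the action of a single $C$-generator explicitly, say $L_{0i}^+$. Using the action formula $\psi.e(x) = e(\psi^{-1}(x))$ from Section~\ref{s:pra}, and the fact that $(L_{0i}^+)^{-1} = L_{0i}^-$ sends $x_0 \mapsto x_i^{-1}x_0$ while fixing the other generators, I obtain
$$
L_{0i}^+ . (g_0 | g_1,\ldots,g_k) \;=\; (g_i^{-1}g_0 \,|\, g_1,\ldots,g_k).
$$
In $Cay(S,G,S)$ the edge $L_i^-$ goes from $v$ to $g_i^{-1}v$; hence, under $\phi$, the $C$-edge labeled $L_{0i}^+$ maps to precisely the $L_i^-$-edge from $g_0$ to $g_i^{-1}g_0$.

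I would then repeat this short calculation for the three remaining types $L_{0i}^-$, $R_{0i}^+$, $R_{0i}^-$, each time using $\psi.e(x) = e(\psi^{-1}(x))$ to pick up the sign inversion. Writing out all four cases gives the correspondence $L_{0i}^{\pm} \leftrightarrow L_i^{\mp}$ and $R_{0i}^{\pm} \leftrightarrow R_i^{\mp}$ asserted in the statement. Since each $C$-generator gives a bijection on $\pi_N^{-1}(S)$ (it is invertible as a group element acting on $\OP E_k(G)$) and similarly the corresponding Cayley generator is a bijection on $G$, the edge sets match exactly under $\phi$, including multiplicities of loops or parallel edges coming from coincidences like $g_i = g_j$ or $g_i = e$.

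The only genuine subtlety is the sign flip produced by the contragredient (precomposition-by-inverse) action on $\Epi(\FP,G)$: left multiplication by $x_i$ in $\FP$ becomes left multiplication by $g_i^{-1}$ in $G$, and analogously on the right. This is pure bookkeeping rather than a real obstacle, but it is the one place where a careless computation would give the wrong labeling. Once this is done, $\phi$ together with the matching of labels is the desired graph isomorphism.
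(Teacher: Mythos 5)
The paper states this lemma without proof, and your direct verification is exactly the intended argument: the vertex bijection $(g|S)\mapsto g$ together with the computation $L_{0i}^{+}.(g_0|S)=(g_i^{-1}g_0|S)$ (via $\psi.e = e\circ\psi^{-1}$) correctly yields the label correspondence $L_{0i}^{\pm}\leftrightarrow L_i^{\mp}$, $R_{0i}^{\pm}\leftrightarrow R_i^{\mp}$. Your proof is correct, including the sign bookkeeping and the treatment of multi-edges and loops.
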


\begin{lemma}
Let $g \in G$. Consider the subset of vertices 
$$\pi_C^{-1}(g) = \{(g|S) \colon S \in \Gamma_k(G) \} \subset \Sigma_k(G).$$
The subgraph of $\Sigma_k(G)$ spanned on $\pi_C^{-1}(g)$ by $N$-edges is isomorphic to $\Gamma_k(G)$ and the labels are preserved. 
\end{lemma}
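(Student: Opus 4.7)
The plan is to exhibit the obvious bijection at the level of vertices and then verify that it sends $N$-edges to $N$-edges with matching labels. First, I would observe that the projection $\pi_C$ is surjective, and that for any fixed $g \in G$ the fiber
$$\pi_C^{-1}(g) = \{(g \mid g_1,\ldots,g_k) : (g_1,\ldots,g_k) \in \Epi(\F_k,G)\}$$
is in bijection with the vertex set $\Epi(\F_k,G)$ of $\Gamma_k(G)$ via $(g \mid g_1,\ldots,g_k) \mapsto (g_1,\ldots,g_k)$, i.e.\ via $\pi_N$ restricted to this fiber.

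Next, I would check that $N$-edges in $\Sigma_k(G)$ never leave the fiber $\pi_C^{-1}(g)$, and that the restriction of $\pi_N$ to $N$-edges preserves the label. Indeed, the $N$-generators are the automorphisms $L^{\pm}_{ij}, R^{\pm}_{ij}$ of $\FP$ with $i,j \in \{1,\ldots,k\}$, and by definition each of these fixes $x_0$ and only modifies the $x_i$ for $i \geq 1$ by multiplying by some $x_j^{\pm}$ with $j \geq 1$. Thus, applying any $N$-generator $s$ to $(g \mid g_1,\ldots,g_k)$ gives $(g \mid s.(g_1,\ldots,g_k))$, so the first coordinate is preserved and the remaining action is exactly the action of $s$ on $\Gamma_k(G)$.

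Consequently, an $N$-edge $(g \mid S) \xlongrightarrow{s} (g \mid s.S)$ in $\Sigma_k(G)$ corresponds under the above bijection to the edge $S \xlongrightarrow{s} s.S$ in $\Gamma_k(G)$, and conversely every edge of $\Gamma_k(G)$ is obtained in this way. This gives an isomorphism of labeled directed graphs, proving the lemma.

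This is really a bookkeeping statement, so I do not anticipate any genuine difficulty; the only thing to be careful about is making the label correspondence explicit and observing that $\pi_N$ is surjective onto all of $\Epi(\F_k,G)$ from each fiber $\pi_C^{-1}(g)$, so that the resulting map is onto $\Gamma_k(G)$ and not merely onto a subgraph.
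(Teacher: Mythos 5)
Your proof is correct: the paper states this lemma without proof, treating it as immediate, and your verification (the $N$-generators fix $x_0$, hence fix the accumulator and act on the last $k$ coordinates exactly as in the definition of $\Gamma_k(G)$) is precisely the bookkeeping the authors left implicit. Nothing is missing.
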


Thus $\Sigma_k(G)$ restricted to $C$-edges is a collection of $Cay(S,G,S)$, indexed by vertices of $\Gamma_k(G)$.
If we restrict $\Sigma_k(G)$  to $N$-edges, then for every $g \in G$ we see the same graph $\Gamma_k(G)$ spanned by $N$-edges on $\pi_C^{-1}(g)$.

Recall that $\Gamma'_k(G)$ is the connected component of $\Gamma_k(G)$ containing the given generating $k$-tuple $S_0$. Let \marginnote{$\Sigma'_k(G)$} $\Sigma'_k(G)$ be the connected component of $(1|S_0)$. We have $\pi_N(\Sigma'_k(G)) = \Gamma'_k(G)$.

The random walk on $\Sigma'_k(G)$ starting at $(1|S_0)$ can be described as follows. We imagine that at all times we are in some element of $G$ and on $G$ we see the structure of a left-right Cayley graph. We start at the trivial element $e \in G$ and the graph we see is $Cay(S_0,G,S_0)$. 
When a $C$-edge is chosen, the walk goes along an edge of $Cay(S_0,G,S_0)$.
Selecting an $N$-edge changes the structure of the Cayley graph to some other $Cay(S,G,S)$. Then for some time we walk in $Cay(S,G,S)$ until we change the Cayley graph again. 
Thus walking in $\Sigma'_k(G)$ is interpreted as a random walk in a randomly changing Cayley graph. 

\begin{remark}\label{r:only-left-transvections}
Let \marginnote{$\OP{LA}_k$} $\OP{LA}_k$ be defined like $\A_k$, but for $C$-generators we take only left Nielsen automorphisms $L_{0i}^\pm$. In the description of $\Sigma_k(G)$   for $\OP{LA}_k$ we would use standard Cayley graphs in place of left-right Cayley graphs. All theorems we show in \Cref{s:main-result} are valid for $\OP{LA}_k$ with the same constants. We prefer to use $\A_k$ due to the symmetry of its generators.  
\end{remark}

\section{Product replacement accumulator algorithm}\label{s:praa}

Let $\Gamma$ be a graph. The lazy random walk on $\Gamma$ is a random walk that with probability $\frac{1}{2}$ stays put and with probability $\frac{1}{2}$ moves to a uniform neighbor. Let $G$ be a finite group and let $S_0$ be a given generating $k$-tuple of $G$. We consider the following algorithm, which is a slight modification of the ``rattle'' algorithm from \cite{MR1929718}.

\textbf{Product replacement accumulator algorithm for $(G,S_0)$:}

\begin{enumerate}
\item Start from $(1|S_0) \in \Sigma'_k(G)$.
\item Perform a lazy random walk with $t$ steps on $\Sigma_k'(G)$.
\item Return the accumulator (the first element of the resulting tuple).
\end{enumerate}

Let $\mu_t$ be the distribution of the lazy random walk on $\Sigma_k'(G)$ after $t$ steps, and let $\nu_t = \pi_C^*(\mu_t)$ be the distribution on $G$ of the accumulator. Note that while the lazy random walk from (2) defines a Markov chain, the sequence of random variables given by the accumulator is no longer Markov.  

Before we proceed let us recall some classical definitions. Let $X$ be a finite set and let $U_X$ be the uniform probability distribution on $X$, i.e. $U_X(x) = \frac{1}{|X|}$ for every $x \in X$. 

For two probability distributions $\mu$ and $\nu$ on $X$ we define the total variation distance:

$$
\|\mu-\nu\|_{\mathrm{tv}} = \frac{1}{2}\sum_{x\in X} |\mu(x)-\nu(x)|.
$$

Let $A \colon X \to Y$ be a map between finite sets.
By $A^*(\mu)$ we denote the push-forward of $\mu$, i.e., $A^*(\mu)(y) = \mu(A^{-1}(y))$.

\begin{lemma}\label{l:A-contracting}
$A^*$ is a contraction, that is
$$\|A^*(\mu)-A^*(\nu)\|_{\mathrm{tv}} \leq \|\mu-\nu\|_{\mathrm{tv}},$$

where $\mu$ and $\nu$ are distributions on $X$.
\end{lemma}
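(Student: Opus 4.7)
The plan is to unfold both sides of the inequality directly from the definitions and apply the triangle inequality. This is a standard fact and the proof should be a few lines.

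First I would expand the left hand side using the definition of total variation distance and of the push-forward:
\[
\|A^*(\mu) - A^*(\nu)\|_{\mathrm{tv}} = \frac{1}{2}\sum_{y \in Y} |A^*(\mu)(y) - A^*(\nu)(y)| = \frac{1}{2}\sum_{y \in Y} \Bigl| \sum_{x \in A^{-1}(y)} (\mu(x) - \nu(x)) \Bigr|.
\]
Next, I would apply the triangle inequality inside each summand to bring the absolute value inside the inner sum:
\[
\Bigl| \sum_{x \in A^{-1}(y)} (\mu(x) - \nu(x)) \Bigr| \leq \sum_{x \in A^{-1}(y)} |\mu(x) - \nu(x)|.
\]

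Finally, I would use the crucial observation that the preimages $\{A^{-1}(y)\}_{y \in Y}$ form a partition of $X$ (allowing empty parts for $y \notin A(X)$). This reorganizes the double sum into a single sum over all of $X$:
\[
\sum_{y \in Y} \sum_{x \in A^{-1}(y)} |\mu(x) - \nu(x)| = \sum_{x \in X} |\mu(x) - \nu(x)| = 2\,\|\mu - \nu\|_{\mathrm{tv}},
\]
which yields the claim.

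There is no real obstacle here; the only thing to be careful about is making sure the partition argument is stated cleanly (in particular, that every $x \in X$ is in exactly one preimage $A^{-1}(A(x))$), and that the factor of $\tfrac{1}{2}$ is carried through correctly. The lemma is used later only as a basic tool to deduce convergence of $\nu_t = \pi_C^*(\mu_t)$ to $U_G$ from convergence of $\mu_t$ on $\Sigma'_k(G)$, so no stronger quantitative version is needed.
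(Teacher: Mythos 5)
Your proof is correct and is exactly the standard argument: expand the push-forward, apply the triangle inequality on each fiber, and use that the fibers $A^{-1}(y)$ partition $X$. The paper omits the proof of this lemma entirely (treating it as a routine fact), so your write-up supplies precisely the argument the authors had in mind.
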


\begin{lemma}\label{l:acc-conv-to-uniform}
Distributions $\nu_t$ converge to the uniform distribution on $G$ and moreover
$$
\|\nu_t - U_G\|_{\mathrm{tv}} \leq \|\mu_t - U_{\Sigma'_k(G)}\|_{\mathrm{tv}}.
$$
\end{lemma}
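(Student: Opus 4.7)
My plan is to derive the lemma from Lemma \ref{l:A-contracting} together with the observation that the uniform distribution pushes forward to the uniform distribution. Since $\nu_t = \pi_C^*(\mu_t)$ by definition, once I know that $\pi_C^*(U_{\Sigma'_k(G)}) = U_G$, the contraction lemma applied to $\pi_C \colon \Sigma'_k(G) \to G$ gives
$$
\|\nu_t - U_G\|_{\mathrm{tv}} = \|\pi_C^*(\mu_t) - \pi_C^*(U_{\Sigma'_k(G)})\|_{\mathrm{tv}} \leq \|\mu_t - U_{\Sigma'_k(G)}\|_{\mathrm{tv}}.
$$
The convergence of $\nu_t$ to $U_G$ then follows from the convergence of $\mu_t$ to $U_{\Sigma'_k(G)}$, which is a standard fact about lazy random walks on finite connected graphs (irreducibility from connectedness of $\Sigma'_k(G)$, aperiodicity from laziness).

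The only thing that really needs checking is the push-forward identity $\pi_C^*(U_{\Sigma'_k(G)}) = U_G$. For this, it suffices to show that the fibers $\pi_C^{-1}(g) \cap \Sigma'_k(G)$ all have the same cardinality. I would prove the stronger statement that
$$
\Sigma'_k(G) = \{(g|S) : g \in G,\ S \in \Gamma'_k(G)\},
$$
so that every fiber has size $|\Gamma'_k(G)|$. The inclusion $\supseteq$ is the substantive direction: starting from $(1|S_0)$, the $C$-generators allow us to move within the subgraph $\pi_N^{-1}(S_0)$, which by the earlier lemma is isomorphic to $\mathrm{Cay}(S_0,G,S_0)$; since $S_0$ generates $G$, this subgraph is connected and we can reach $(g|S_0)$ for any $g \in G$. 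Then, using $N$-generators, the subgraph $\pi_C^{-1}(g)$ contains a copy of $\Gamma_k(G)$ with labels preserved, so from $(g|S_0)$ we can reach $(g|S)$ for any $S$ in the connected component $\Gamma'_k(G)$ of $S_0$. The reverse inclusion $\subseteq$ is immediate because neither type of edge can leave this product set.

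I do not expect a genuine obstacle here; the main thing to be careful about is the distinction between the full graph $\Sigma_k(G)$ (which is plainly $G \times \Epi(\F_k,G)$) and the connected component $\Sigma'_k(G)$, because the earlier lemmas in \Cref{s:random-walk} describe fibers inside $\Sigma_k(G)$ rather than $\Sigma'_k(G)$. Establishing the product description of $\Sigma'_k(G)$ rules out the possibility that the component is a proper subset in which some $g \in G$ appears fewer times than others, which is exactly what would destroy uniformity of the push-forward.
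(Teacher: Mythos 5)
Your proof is correct and follows essentially the same route as the paper: apply the contraction property of $\pi_C^*$ and check that the fibers of $\pi_C$ restricted to $\Sigma'_k(G)$ all have size $|\Gamma'_k(G)|$, so that $\pi_C^*(U_{\Sigma'_k(G)}) = U_G$. The only difference is that the paper simply asserts $|\pi_C^{-1}(g)| = |\Gamma'_k(G)|$, whereas you actually justify it by proving the product description $\Sigma'_k(G) = \{(g|S) : g \in G,\ S \in \Gamma'_k(G)\}$ via $C$-edge connectivity of $Cay(S_0,G,S_0)$ followed by $N$-edge moves, which is a correct and welcome elaboration of the step the paper leaves implicit.
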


\begin{proof}
We have that  $|\pi_C^{-1}(g)| = |\Gamma'_k(G)|$. Hence it is independent of $g \in G$. Thus

$$
\pi_C^*(U_{\Sigma'_k(G)})(g) = 
U_{\Sigma'_k(G)}(\pi_C^{-1}(g)) =
\frac{|\Gamma'_k(G)|}{|\Sigma'_k(G)|} = 
\frac{1}{|G|}
$$

Hence $\pi_C^*(U_{\Sigma'_k(G)}) = U_G$. Now we apply \Cref{l:A-contracting} and the fact that $\mu_t$ converges to the uniform distribution on $\Sigma_k'(G)$.
\end{proof}

\section{Proof of the main result}\label{s:main-result}

Let $R$ be a ring and $* \colon R \to R$ be a map. We say that $\xi \in R$ is a sum of squares, and write  \marginnote{$\xi \geq 0$} $\xi \geq 0$, if $\xi = \Sigma_i \xi_i^*\xi_i$ for some $\xi_i \in R$. In this paper we have $R = \BB R[G]$, the group ring of a group $G$, where $*$ is the linear extension of $g^*=g^{-1}$, or $R = \OP{L}(\BB C^V)$, the ring of linear endomorphisms of $\BB C^V$, with $*$ being the Hermitian transposition. 

To show our main result we use the strategy developed in \cite{MR4224715}. First, we carefully select an element $\xi \in \mathbb{R}[\A_5]$ and use computer calculations to show that $\xi\geq 0$. Next, using human calculations and the fact that $\xi \geq 0$, we show for $k> 5$ that $\Delta^2_k - 0.35(k-5)\Delta_k \geq 0$ and $\Delta_5^2-1.41\Delta_5\geq 0$, where $\Delta_k$ is the group Laplacian of $\A_k$. From that result  \Cref{t:rate-of-convergence} follows easily. 

\subsection{Decomposition of the squared Laplacian.}\label{ss:decompositions}

Let $k \in \mathbb{N}$. For every pair of indices $i \neq j$ we define the partial Laplacian $\Delta_{ij} \in \mathbb{R}[\A_k]$ to be:
$$\Delta_{ij} = (1-L_{ij})(1-L_{ij})^* + (1-R_{ij})(1-R_{ij})^* .$$ 

We also define $\Delta_{ii} = 0$. The Laplacians related to $C$-generators and $N$-generators are: 

$$
\Delta_k^C = \sum_{1 \leq s \leq k} \Delta_{\mathrm{0}s}
\hspace{1cm}
\Delta_k^N = \sum_{1 \leq i,j \leq k} \Delta_{ij}.
$$

The full non-normalized group Laplacian of $\A_k$ is \marginnote{$\Delta_k$}:

$$
\Delta_k = \Delta_{k}^C + \Delta_k^N.
$$

Now we focus on the square of the Laplacian. We have 

$$
\Delta_k^2 = (\Delta_k^C)^2+ \{\Delta_k^C,\Delta_k^N\}+ (\Delta_k^N)^2,
$$

where 
$$
\{\Delta_k^C,\Delta_k^N\}=\Delta_k^C\Delta_k^N+\Delta_k^N\Delta_k^C.
$$

We shall further decompose each of the above terms. In the following sums the indices always range from $1$ to $k$.

\underline{\textit{Decomposition of $(\Delta_k^C)^2$:}}

We have
$$
(\Delta_k^C)^2 = \Sq_k^C + \Adj_k^C,
$$

where 

\[
\begin{array}{lll}
\Sq_k^C &= &\displaystyle \sum_{s}\Delta_{\mathrm{0}s}^2 \\[1.5em]
\Adj_k^C &=&\displaystyle  \sum_{s\neq t}\Delta_{\mathrm{0}s}\Delta_{\mathrm{0}t}.
\end{array}
\]

\underline{\textit{Decomposition of $\{\Delta_k^C,\Delta_k^N\}$:}}

We have 

$$
\{\Delta_k^C,\Delta_k^N\} = \Adj_k^{CN} + \Opp_k^{CN},
$$

where 

\[
\begin{array}{lll}
\Adj_k^{CN} &=& \;\;\;\displaystyle \sum_{i,j}\;\;\;\;\{\Delta_{\mathrm{0}i}+\Delta_{\mathrm{0}j},\Delta_{ij}\}\\[1.5em]
\Opp_k^{CN} &=& \displaystyle \sum_{|\{s,i,j\}|=3}\{\Delta_{\mathrm{0}s},\Delta_{ij}\}.
\end{array}
\]

\underline{\textit{Decomposition of $(\Delta_k^N)^2$:}}

We have

$$
(\Delta_k^N)^2 = \Sq_k^N+\Adj_k^N+\Opp_k^N,
$$

where 

\[
\begin{array}{lll}
\Sq_k^N &=& \; \displaystyle \frac{1}{2}\sum_{i,j}\;\;(\Delta_{ij}+\Delta_{ji})^2\\[1.5em]
\Adj_k^N &=& \;\displaystyle  \sum_{|\{i,j,l\}|=3}\Delta_{ij}(\Delta_{il}+\Delta_{li}+\Delta_{jl}+\Delta_{lj})\\[1.5em]
\Opp_k^N &=& \displaystyle \sum_{|\{i,j,l,m\}| = 4\!\!\!\!}\Delta_{ij}\Delta_{lm}.
\end{array}
\]

\subsection{Induction}\label{ss:induction}

Let $k \leq K \in \BB N$ and let $S_K$ act on $\{x_1,\ldots,x_K\}$ by permuting indices. This action induces an action on $\F_{K+1}$ by automorphisms (the generator $x_0$ is fixed), which in turn induces the action by conjugations $\psi \to \sigma\psi\sigma^{-1}$ on $\A_K$. It follows that $\sigma.L_{ij} = L_{\sigma(i)\sigma(j)}$, and $\sigma.R_{ij} = R_{\sigma(i)\sigma(j)}$. 

Let $\xi \in \BB R[\A_k]$. We define the symmetrization $S \colon \BB R[\A_k] \to \BB R[\A_K]$ to be:

$$
S(\xi) = \sum_{\sigma \in S_K} \sigma.\xi.
$$

Now we apply $S$ to the elements that appear in the decompositions in  \Cref{ss:decompositions}. For $\Delta_k$ we have:

\[
\begin{array}{l@{\;}c@{\;}l@{\;\;}l}
S(\Delta_k^C)   & = & (K-1)!\,k\,\Delta_K^C\\
S(\Delta_k^N)   & = & (K-2)!\,(k-1)k\,\Delta_K^N.
\end{array}
\]

For $(\Delta_k^C)^2$ we have:

\[
\begin{array}{l@{\;}c@{\;}l@{\;\;}l}
S(\Adj_k^C)     & = & (K-2)!\,(k-1)k\,\Adj_K^C.
\end{array}
\]

For $\{\Delta_k^C,\Delta_k^N\}$ we have:

\[
\begin{array}{l@{\;}c@{\;}l@{\;\;}l}
S(\Adj_k^{CN})  & = & (K-2)!\,(k-1)k\,\Adj_K^{CN}\\
S(\Opp_k^{CN})  & = & (K-3)!\,(k-2)(k-1)k\,\Opp_K^{CN}.
\end{array}
\]

And finally for $(\Delta_k^N)^2$ we have:

\[
\begin{array}{l@{\;}c@{\;}l@{\;\;}l}
S(\Sq_k^N)      & = & (K-2)!\,(k-1)k\,\Sq_K^N\\
S(\Adj_k^N)     & = & (K-3)!\,(k-2)(k-1)k\,\Adj_K^N\\
S(\Opp_k^N)     & = & (K-4)!\,(k-3)(k-2)(k-1)k\,\Opp_K^N.
\end{array}
\]

\begin{lemma}\label{l:induction-element}
The following inequality holds in $\BB R[\A_5]$ with $\lambda_5 = 1.41$:
$$\Adj_5^C + \Adj_5^{CN} + \Opp_5^{CN} + (\Delta_5^N)^2 \geq \lambda_5\Delta_5.$$
\end{lemma}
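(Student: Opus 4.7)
The plan is to exhibit $\xi := \Adj_5^C + \Adj_5^{CN} + \Opp_5^{CN} + (\Delta_5^N)^2 - \lambda_5 \Delta_5$ explicitly as a sum of squares in $\BB R[\A_5]$ via a computer-assisted semidefinite programming certificate, following the strategy pioneered by Kaluba, Kielak and Nowak \cite{MR4224715} for $\SAut(\F_n)$. In view of the decomposition of $\Delta_5^2$ from \Cref{ss:decompositions}, the left-hand side of the lemma is precisely $\Delta_5^2 - \Sq_5^C$, so what needs to be shown is $\Delta_5^2 - \Sq_5^C - \lambda_5 \Delta_5 \geq 0$ in $\BB R[\A_5]$.

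First I would estimate the support of $\xi$. Each $\Delta_{ij}$ is supported on $\{1, L_{ij}^{\pm 1}, R_{ij}^{\pm 1}\}$, inside the ball $B_1$ of radius $1$ around the identity of $\A_5$ with respect to the generating set introduced in \Cref{s:random-walk}. Since every term in $\xi$ is either a product of two such Laplacians or (in the $\lambda_5 \Delta_5$ piece) a single one, the support of $\xi$ lies within $B_2$. It is therefore natural to search for a decomposition $\xi = \sum_l \alpha_l^* \alpha_l$ with each $\alpha_l$ supported in a finite half-basis $\mathcal{B} \subset B_1$. Letting $\mathbf{b}$ be the column vector of elements of $\mathcal{B}$, the existence of such a decomposition is equivalent to the existence of a positive semidefinite matrix $P$ with $\mathbf{b}^* P \mathbf{b} = \xi$; this is a standard semidefinite feasibility problem over $\BB R$.

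Next I would set up this SDP symbolically, exploiting the $S_5$-action on $\A_5$ by permutation of indices (which fixes $\xi$) to block-diagonalize the moment matrix along isotypic components and thereby shrink the problem drastically. Solving the reduced SDP numerically with a standard solver produces an approximate $P_0 \succeq 0$ with $\mathbf{b}^* P_0 \mathbf{b} \approx \xi$. The constant $\lambda_5 = 1.41$ is chosen strictly below the conjectured SDP optimum to leave room for the subsequent rigorous rounding.

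The main obstacle is converting this floating-point solution into a rigorous mathematical certificate. The standard remedy, used in \cite{MR4224715, nitsche2022computerproofspropertyt}, is to bound the residual $r := \xi - \mathbf{b}^* P_0 \mathbf{b}$ in the $\ell^1$-norm on $\BB R[\A_5]$ and absorb it by slightly weakening the spectral constant: if the SDP is solved with target value $\lambda'_5 > \lambda_5$ and the residual satisfies $\|r\|_1 \leq C(\lambda'_5 - \lambda_5)$ for an explicit constant $C$ depending only on $\|\Delta_5\|_1$, then a controlled perturbation of $P_0$ lies exactly on the affine set $\{P : \mathbf{b}^* P \mathbf{b} = \xi\}$ and remains positive semidefinite. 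Carrying this out demands either interval arithmetic or an exact rational projection combined with a symbolic verification of positive semidefiniteness, and it is at this stage that numerical precision and the choice of $\lambda_5$ must be carefully balanced. Implementation details are deferred to \Cref{s:computer-calc}.
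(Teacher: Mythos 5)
Your proposal is correct and follows essentially the same route as the paper: the authors likewise observe that $\xi$ is a $*$-invariant element of the augmentation ideal and then defer to the computer-assisted procedure of \Cref{s:computer-calc} (numerical SDP over a ball, symmetrization, absorption of the residual via the Netzer--Thom bound, and interval-arithmetic certification). The only minor differences are that the paper symmetrizes with respect to the larger wreath-product group $\mathbb{Z}_2^{k+1}\rtimes S_k$ rather than just $S_5$, and that the residual bound is $m^2\|r\|_1$ with $m$ the maximal word length in the support of $r$, rather than a constant depending on $\|\Delta_5\|_1$.
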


\begin{proof}
Note that $\Delta_{ij}$ is contained in the augmentation ideal and $\Delta_{ij} = \Delta_{ij}^*$. Thus the same is true for $\xi = \Adj_5^C + \Adj_5^{CN} + \Opp_5^{CN} + (\Delta_5^N)^2$. Now we can find a numerical approximation and certify the inequality as described in \Cref{s:computer-calc}.
\end{proof}

\begin{theorem}\label{t:sos}
Assume that $K > 5$ and let $\Delta_K$ be the non-normalized Laplacian of $\A_K$. We have $\Delta_K^2 - 0.35(K-5)\Delta_K \geq 0$. For $K=5$ we have $\Delta_5^2 - 1.41\Delta_5 \geq 0.$
\end{theorem}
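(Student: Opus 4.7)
The plan is to reduce the $K > 5$ case to \Cref{l:induction-element} by symmetrization, and to handle $K=5$ directly. For $K = 5$, the decomposition in \Cref{ss:decompositions} reads $\Delta_5^2 = \Sq_5^C + \bigl(\Adj_5^C + \Adj_5^{CN} + \Opp_5^{CN} + (\Delta_5^N)^2\bigr)$. Each $\Delta_{0s}$ is Hermitian, so $\Sq_5^C = \sum_{s=1}^{5}\Delta_{0s}^2 \geq 0$. Combining this with \Cref{l:induction-element} yields $\Delta_5^2 - 1.41\,\Delta_5 \geq \Sq_5^C \geq 0$ at once.

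For $K>5$, I would apply the symmetrization $S \colon \BB R[\A_5] \to \BB R[\A_K]$ of \Cref{ss:induction} to the certificate of \Cref{l:induction-element}. Because each $\sigma \in S_K$ satisfies $\sigma^* = \sigma^{-1}$, conjugation by $\sigma$ preserves being a sum of squares, and so does the averaged map $S$; hence $S\bigl(\Adj_5^C + \Adj_5^{CN} + \Opp_5^{CN} + (\Delta_5^N)^2 - 1.41\,\Delta_5\bigr) \geq 0$ in $\BB R[\A_K]$. Plugging in the symmetrization formulas of \Cref{ss:induction} and dividing through by $20(K-2)!$, this becomes
\[
\Adj_K^C + \Adj_K^{CN} + \Sq_K^N + \tfrac{3}{K-2}(\Opp_K^{CN}+\Adj_K^N) + \tfrac{6}{(K-2)(K-3)}\Opp_K^N \;\geq\; \tfrac{1.41(K-1)}{4}\,\Delta_K^C + 1.41\,\Delta_K^N.
\]
Combining this with $\Delta_K^2 = \Sq_K^C + \Adj_K^C + \Adj_K^{CN} + \Opp_K^{CN} + \Sq_K^N + \Adj_K^N + \Opp_K^N$ from \Cref{ss:decompositions} and rearranging yields
\[
\Delta_K^2 - 0.35(K-5)\Delta_K \;\geq\; \Sq_K^C + A\,\Delta_K^C + B\,\Delta_K^N + \tfrac{K-5}{K-2}(\Opp_K^{CN}+\Adj_K^N) + \tfrac{K(K-5)}{(K-2)(K-3)}\Opp_K^N,
\]
where $A = 0.0025\,K + 1.3975 > 0$ and $B = 3.16 - 0.35\,K$, with $B \geq 0$ precisely for $K\leq 9$.

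The main obstacle will be certifying that this right-hand side is non-negative, particularly when $K \geq 10$ so that $B < 0$. My plan is to rewrite the $\Opp$- and $\Adj^N$-type cross-terms using the commutativity of disjoint-index Nielsen generators: for pairwise distinct $s,i,j$ one has $\{\Delta_{0s},\Delta_{ij}\} = (\Delta_{0s}+\Delta_{ij})^2 - \Delta_{0s}^2 - \Delta_{ij}^2$, and for $|\{i,j,l,m\}| = 4$ one has $2\,\Delta_{ij}\Delta_{lm} = (\Delta_{ij}+\Delta_{lm})^2 - \Delta_{ij}^2 - \Delta_{lm}^2$. These identities express each cross-term as an explicit Hermitian square minus multiples of $\Sq_K^C$ and $\sum_{i\neq j}\Delta_{ij}^2$; using $(\Delta_K^N)^2 \geq 0$ to control $\sum_{i\neq j}\Delta_{ij}^2$ and the slack in $\Sq_K^C + A\,\Delta_K^C$ to absorb the $C$-squares, the growing prefactors $\tfrac{K-5}{K-2}$ and $\tfrac{K(K-5)}{(K-2)(K-3)}$ produce enough positive mass to swamp the negativity of $B\,\Delta_K^N$. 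The constant $0.35$, chosen just below $\lambda_5/4 = 0.3525$, is calibrated so that the coefficients across all buckets align into a genuine sum-of-squares certificate.
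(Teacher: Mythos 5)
Your $K=5$ case and your setup for $K>5$ are correct and match the paper: the symmetrization of \Cref{l:induction-element}, the division by $20(K-2)!$, and the resulting coefficients $\tfrac{K-5}{K-2}$, $\tfrac{K(K-5)}{(K-2)(K-3)}$, $A$ and $B$ are all right, and you have correctly isolated the real difficulty, namely that $B = 3.16-0.35K$ becomes negative for $K\geq 10$. But your plan for resolving that difficulty does not work, for two reasons. First, the identity $\{a,b\}=(a+b)^2-a^2-b^2$ writes each cross-term as a Hermitian square \emph{minus} squares, which is the wrong direction; the correct statement, $\Opp_K^{CN}\geq 0$ and $\Opp_K^N\geq 0$, follows instead from the fact that a product of two sums of Hermitian squares of mutually commuting elements is again a sum of Hermitian squares (this is \cite[Lemma 3.6]{MR4224715}, which the paper simply cites). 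More importantly, no amount of manipulation of $\Opp_K^{CN}\geq 0$ and $\Opp_K^N\geq 0$ can produce a lower bound of the form $c\,\Delta_K^N$ with $c>0$ growing in $K$ — positivity alone does not yield a spectral-gap-type inequality — so these terms cannot ``swamp'' the deficit $B\,\Delta_K^N$. Second, your commutativity device does not even apply to $\Adj_K^N$: its summands $\Delta_{ij}(\Delta_{il}+\Delta_{li}+\Delta_{jl}+\Delta_{lj})$ share the index $i$ or $j$, the corresponding Nielsen generators do not commute, and $\Adj_K^N$ is not known to be a sum of squares.

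The missing ingredient is an external spectral inequality for the $N$-part. The paper imports $\Adj_k^N + 3\Opp_k^N \geq 1.37\,\Delta_k^N$ from \cite[Remark 5.8]{MR4224715} (itself a computer-certified statement for $\SAut(\F_k)$), symmetrizes it to
\[
\Adj_K^N + \tfrac{3(k-3)}{K-3}\Opp_K^N \;\geq\; \tfrac{1.37(K-2)}{k-2}\,\Delta_K^N,
\]
and adds $u=1-\tfrac{k-2}{K-2}=\tfrac{K-5}{K-2}$ times this to the symmetrized main inequality. This raises the coefficient of $\Adj_K^N$ to exactly $1$, keeps the coefficient of $\Opp_K^N$ at most $1$ (so the remaining deficit can be filled using $\Opp_K^N\geq 0$ and $\Opp_K^{CN}\geq 0$), and — crucially — contributes the extra term $\tfrac{1.37(K-2)u}{k-2}\Delta_K^N = \tfrac{1.37}{3}(K-5)\Delta_K^N$, which grows linearly in $K$ and dominates $0.35(K-5)\Delta_K^N$. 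Without this second certified inequality your argument cannot close for $K\geq 10$.
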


\begin{proof}

Let $k=5$ and $K \geq 5$. Using \Cref{l:induction-element}, applying the operator $S$ and dividing by $(K-2)!(k-1)k$ we obtain:

\begin{equation}
\begin{array}{l@{\hskip 5pt}c@{\hskip 5pt}r@{\hskip 5pt}c@{\hskip 5pt}l@{\hskip 5pt}r@{\hskip 5pt}r@{\hskip 5pt}l@{\hskip 5pt}}
&&&&\Adj_K^{C}\\[0.4em]
+&&&&\Adj_K^{CN}&+&\frac{k-2}{K-2}&\Opp_K^{CN}\\[0.4em]
+&\Sq_K^{N}&+&\frac{k-2}{K-2} &\Adj_K^{N}&+&\frac{(k-3)(k-2)}{(K-3)(K-2)}& \Opp_K^{N} \\[0.6em]
&&&&&\geq &\frac{\lambda_k (K-1)}{k-1}&\Delta_K^C+\lambda_k\Delta_K^N. 
\end{array}
\label{eq:main}
\end{equation}

Since $K-2 \geq k-2$, the coefficient of $\Opp_K^{CN}$ is not greater than one. Note that $\Opp_K^{CN} \geq 0$ and $\Opp_K^N \geq 0$ \cite[Lemma 3.6]{MR4224715}.\\

By \cite[Remark 5.8]{MR4224715} we have  $\Adj_k^N + 3\Opp_k^N \geq 1.37\Delta_k^N$. Using the operator $S$ and dividing by $(K-3)!(k-2)(k-1)k$ we get 

\begin{equation}
\Adj_K^N + \frac{3(k-3)}{K-3}\Opp_K^N  \geq \frac{1.37(K-2)}{k-2}\Delta_K^N.
\label{eq:kkn}
\end{equation}

Let $u = 1 - \frac{k-2}{K-2} \geq 0$. We multiply inequality \ref{eq:kkn} by $u$ and add to inequality \ref{eq:main}. Observe that then the coefficient of $\Adj_K^N$ beomes $1$ and the coefficient of $\Opp_K^N$ is: 
$$\gamma = \frac{3(k-3)u}{K-3} +\frac{(k-3)(k-2)}{(K-3)(K-2)},$$ 

which is not greater than $1$ by a straightforward computation. Thus we get:

\[
\begin{array}{r@{\hskip 5pt}c@{\hskip 5pt}l@{\hskip 5pt}l@{\hskip 5pt}l@{\hskip 5pt}r@{\hskip 5pt}l}
&&&\Adj_K^{C}\\[0.4em]
+&&&\Adj_K^{CN}  & + & \frac{k-2}{K-2}&\Opp_K^{CN} \\[0.4em]
+& \Sq_K^{N}   & + & \Adj_K^{N}&+&\gamma &\Opp_K^{N} \\[0.4em]
&& && \geq&  \frac{\lambda_k (K-1)}{k-1}&\Delta_K^C + (\frac{1.37 (K-2)u}{k-2}+\lambda_k)\Delta_K^N.
\end{array}
\]

Next, we increase the coefficient of $\Opp^{CN}_K$ and $\Opp^N_K$ to $1$ and add $\Sq_K^C \geq 0$. We get:

\begin{equation}
\begin{array}{r@{\hskip 5pt}c@{\hskip 5pt}l@{\hskip 5pt}l@{\hskip 5pt}l@{\hskip 5pt}r@{\hskip 5pt}l}
(\Delta_K^C+\Delta_K^N)^2 & = &\Sq_K^C&+&\Adj_K^{C}\\[0.4em]
&+&&&\Adj_K^{CN}  & + &\Opp_K^{CN} \\[0.4em]
&+& \Sq_K^{N}   & + & \Adj_K^{N}&+&\Opp_K^{N} \\[0.6em]
&&& && \geq&  \frac{\lambda_k (K-1)}{k-1}\Delta_K^C + (\frac{1.37 (K-2)u}{k-2}+\lambda_k)\Delta_K^N.
\end{array}
\label{eq:final}
\end{equation}

Since $(K-2)u = K-5$, $k = 5$ and $\lambda_k = 1.41$, we have 

$$
\frac{1.37(K-5)}{k-2}+\lambda_k > \frac{\lambda_k(K-5)}{k-1} = 0.35(K-5).
$$ 

Therefore $\Delta_K^2 \geq 0.35(K-5)\Delta_K$.

If $K=5$, we have $u=0$ and directly from inequality \ref{eq:final} it follows that $\Delta^2_K \geq 1.41\Delta_K$. 

\end{proof}

Note that it follows from \Cref{t:sos} that $\A_k$ has property (T) for $k\geq 5$ \cite{MR3427595}.

\begin{remark}
To show \Cref{t:sos}, it would be enough to have $\Adj_5^C + \Adj_5^{CN} + \Opp_5^{CN} \geq \lambda_5\Delta_5$ for some $\lambda_5>0$. However, computer calculations could not certify this inequality. This is why we added the additional term $(\Delta_5^N)^2$.
\end{remark}

\subsection{Spectral gap estimates}\label{ss:convergence}

Let $G$ be a finite group and $S_0$ be a generating $k$-tuple of $G$. Let $\Sigma'_k(G)$ be the connected component of $(1|S_0)$ in $\Sigma_k(G)$. Recall that $\Sigma'_k(G)$ is a $4k^2$-regular graph. By \marginnote{$\Delta_{\Sigma'_k(G)}$} $\Delta_{\Sigma'_k(G)}$ we denote the non-normalized Laplacian of $\Sigma'_k(G)$, i.e., $\Delta_{\Sigma'_k(G)} = 4k^2I - A$ where $A$ is the adjacency matrix of $\Sigma'_k(G)$. Each entry of $A$ is the number of directed edges connecting a given vertex to a given vertex. The spectral gap of $\Sigma'_k(G)$ is the spectral gap of the normalized Laplacian $\frac{1}{4k^2}\Delta_{\Sigma'_k(G)}$.

\begin{corollary}\label{c:spectral-gap}
Let $k>5$. The spectral gap of $\Sigma'_k(G)$ is at least $\frac{0.35(k-5)}{4k^2}$.
\end{corollary}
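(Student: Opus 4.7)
The plan is to transfer the group-ring inequality of \Cref{t:sos} to an operator inequality on $\BB C^{\Sigma'_k(G)}$ via the Schreier representation, and then read off the spectral gap from the polynomial identity $\lambda(\lambda - 0.35(k-5)) \geq 0$.

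First I would fix the Schreier (permutation) representation $\pi \colon \A_k \to \OP{L}(\BB C^{\Sigma'_k(G)})$ coming from the $\A_k$-action on $\E_k(G)$ restricted to the orbit $\Sigma'_k(G)$, and extend $\pi$ linearly to $\BB R[\A_k]$. Since $\pi(g)$ is a permutation matrix, $\pi(g^{-1}) = \pi(g)^*$; in particular $\pi$ is a $*$-homomorphism, so it sends sums of squares in $\BB R[\A_k]$ to positive semidefinite operators on $\BB C^{\Sigma'_k(G)}$.

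Next I would check that $\pi(\Delta_k) = \Delta_{\Sigma'_k(G)}$. Expanding $(1-L_{ij})(1-L_{ij})^* = 2 - L_{ij} - L_{ij}^{-1}$ and similarly for $R_{ij}$, each partial Laplacian becomes $\Delta_{ij} = 4 - L_{ij} - L_{ij}^{-1} - R_{ij} - R_{ij}^{-1}$. Summing over the $C$-generators and $N$-generators gives
\[
\Delta_k = 4k^2 - \sum_{s \in N_k \cup C_k} s,
\]
where the sum runs over all $4k^2$ labeled generators of $\A_k$ (counted with their inverses). Applying $\pi$, the constant term becomes $4k^2 I$ and the second term becomes the adjacency operator $A$ of the $4k^2$-regular graph $\Sigma'_k(G)$. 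Hence $\pi(\Delta_k) = 4k^2 I - A = \Delta_{\Sigma'_k(G)}$.

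Now I would apply $\pi$ to the inequality $\Delta_k^2 - 0.35(k-5)\Delta_k \geq 0$ from \Cref{t:sos}, obtaining the self-adjoint operator inequality
\[
\Delta_{\Sigma'_k(G)}^2 - 0.35(k-5)\,\Delta_{\Sigma'_k(G)} \succeq 0.
\]
Since $\Delta_{\Sigma'_k(G)}$ is a real symmetric positive semidefinite operator (the generating multiset is closed under inverses, so $A$ is symmetric), it is diagonalizable with nonnegative real eigenvalues. For any eigenvalue $\lambda$ one gets $\lambda^2 \geq 0.35(k-5)\lambda$, hence $\lambda = 0$ or $\lambda \geq 0.35(k-5)$. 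Because $\Sigma'_k(G)$ is connected by construction, the eigenvalue $0$ has multiplicity one, so the smallest nonzero eigenvalue of $\Delta_{\Sigma'_k(G)}$ is at least $0.35(k-5)$. Dividing by $4k^2$, the spectral gap of the normalized Laplacian $\tfrac{1}{4k^2}\Delta_{\Sigma'_k(G)}$ is at least $\tfrac{0.35(k-5)}{4k^2}$.

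There is no substantial obstacle: the whole work has already been done inside $\BB R[\A_K]$ in \Cref{t:sos}, and the corollary is a one-page transfer. The only detail that needs care is bookkeeping the factor of $4k^2$ between the group-ring Laplacian and the unnormalized graph Laplacian, which is immediate once one observes that each $\Delta_{ij}$ contributes a constant $4$ matching the four oriented generators $L_{ij}^\pm, R_{ij}^\pm$.
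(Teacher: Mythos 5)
Your proposal is correct and follows essentially the same route as the paper: both pass the sum-of-squares inequality of \Cref{t:sos} through the permutation $*$-representation on $\BB C^{\Sigma'_k(G)}$, identify the image of $\Delta_k$ with $4k^2 I - A$, and read off the eigenvalue dichotomy $\lambda = 0$ or $\lambda \geq 0.35(k-5)$ before normalizing by $4k^2$. The only difference is that you spell out the bookkeeping (the constant $4k^2$ and the connectedness argument) that the paper leaves implicit.
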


\begin{proof}
Let $V$ be the set of vertices of $\Sigma'_k(G)$. Since $\A_k$ acts on $V$, we have a unitary representation $\rho_G \colon \A_k \to \OP{U}(\BB C^V)$. By $\rho_G$ we denote also its linear extension to the ring $*$-homomorphism $\rho_G \colon \BB R [\A_k] \to \OP{L}(\BB C^V)$ where by $\OP{L}(\BB C^V)$ we denote the space of linear endomorphisms of $\BB C^V$. We have that 
$$\rho_G(\Delta_k) = \Delta_{\Sigma'_k(G)}.$$
Note that if $\xi \in \OP{L}(\BB C^V)$ and $\xi^2-\lambda \xi \geq 0$, then the spectral gap of $\xi$ is at least $\lambda$. By  \Cref{t:sos}, we have $\Delta_{\Sigma'_k(G)}^2 - 0.35(k-5)\Delta_{\Sigma'_k(G)} \geq 0$, hence the spectral gap of $\Sigma'_k(G)$ is at least $\frac{0.35(k-5)}{4k^2}$.
\end{proof}

\begin{remark}\label{r:sharp}
The growth of $\lambda_k$ in the inequality $\Delta_k^2 - \lambda_k\Delta_k \geq 0$ in \Cref{t:sos} cannot be faster than linear. Indeed, $\A_k$ maps onto $\SAut(\F_k)$, see  \Cref{s:semidirect}, and $\SAut(\F_k)$ maps further to $\OP{SL}_k(\BB Z)$ which acts transitively on $V = \BB Z_3^k \setminus \{0\}$. This action induces a representation $\rho_k \colon \A_k \to \OP{U}(\BB C^V)$. 
Note that for $v = \Sigma_{i=1}^k (\delta_{e_i} - \delta_{-e_i}) \in \BB C^V$, we have: 
$$\sup_{s} \|\rho_k(s)v-v
\|= \sqrt{\frac{2}{k}}\|v\|,$$

where $s$ runs over the generators of $\A_k$. Let $\Delta_{\rho_k} = \rho_k(\Delta_k)$ and let $\lambda_{\rho_k}$ be the spectral gap of $\Delta_{\rho_k}$. By \Cref{c:spectral-gap} we have $\lambda_k \leq \lambda_{\rho_k}$. Since $v$ is orthogonal to constant functions in $\BB C^V$, and constant functions are the only eigenvectors of $\Delta_{\rho_k}$ with $0$ eigenvalue, we have: 

$$
\lambda_{\rho_k} \leq \frac{\|\la \Delta_{\rho_k} v,v \ra \|}{\|v\|^2} = \frac{1}{\|v\|^2}\sum_{s}\|\rho_k(s)v-v\|^2 \leq 4k^2\Big(\sqrt{\frac{2}{k}}\Big)^2 = 8k.
$$ 

Thus \Cref{t:sos} is asymptotically sharp. 
\end{remark}

\begin{theorem}\label{t:rate-of-convergence}
Let $G$ be a finite group and $S_0$ a generating $k$-tuple of $G$, $k>5$. Let $\nu_t$ be the distribution of the product replacement accumulator for $(G,S_0)$ after $t$ steps. We have

$$
\|\nu_t - U_G\|_{\mathrm{tv}} < \epsilon \text{ for } t \geq \frac{8k^2}{0.35(k-5)}((k+1)\log|G| + \log(\epsilon^{-1})).
$$
\end{theorem}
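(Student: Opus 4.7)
The plan is to combine two already-established ingredients: the contraction argument of \Cref{l:acc-conv-to-uniform}, which reduces mixing of the accumulator to mixing on $\Sigma'_k(G)$, and the spectral gap estimate of \Cref{c:spectral-gap}. First I would apply \Cref{l:acc-conv-to-uniform} to obtain
$$
\|\nu_t - U_G\|_{\mathrm{tv}} \;\leq\; \|\mu_t - U_{\Sigma'_k(G)}\|_{\mathrm{tv}},
$$
so the task becomes bounding the mixing time of the lazy random walk on $\Sigma'_k(G)$. The generating set of $\A_k$ used to define this walk is closed under inversion (every $L_{ij}^{\pm}$ and $R_{ij}^{\pm}$ comes paired with its inverse), so $\Sigma'_k(G)$ is a symmetric $4k^2$-regular graph and the lazy walk is reversible with uniform stationary measure.

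Next I would invoke the standard spectral bound for reversible lazy chains on a finite, regular graph $\Gamma$: if the normalized Laplacian has spectral gap $\lambda$, then
$$
\|\mu_t - U_{V(\Gamma)}\|_{\mathrm{tv}} \;\leq\; \tfrac{1}{2}\sqrt{|V(\Gamma)|}\,(1-\lambda/2)^{t} \;\leq\; \tfrac{1}{2}\sqrt{|V(\Gamma)|}\,e^{-t\lambda/2}.
$$
By \Cref{c:spectral-gap} I may take $\lambda = \tfrac{0.35(k-5)}{4k^2}$. To convert the $\sqrt{|V|}$ factor into a bound in terms of $|G|$, I would use the inclusion $\Sigma'_k(G)\subset \E_k(G) = G\times\Epi(\F_k,G)\subset G\times G^k$, giving
$$
|\Sigma'_k(G)| \;\leq\; |G|^{k+1}, \qquad \log|\Sigma'_k(G)| \;\leq\; (k+1)\log|G|.
$$

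To finish, it suffices to pick $t$ so that $\tfrac{1}{2}|G|^{(k+1)/2}e^{-t\lambda/2}<\epsilon$, equivalently
$$
t \;>\; \frac{2}{\lambda}\Bigl(\tfrac{k+1}{2}\log|G| + \log(2\epsilon)^{-1}\Bigr).
$$
Bounding $\log(2\epsilon)^{-1}\leq\log(\epsilon^{-1})$ and pessimistically replacing $\tfrac{k+1}{2}\log|G|$ by $(k+1)\log|G|$ to match the form advertised in the statement, the sufficient condition becomes
$$
t \;\geq\; \frac{2}{\lambda}\bigl((k+1)\log|G|+\log(\epsilon^{-1})\bigr) \;=\; \frac{8k^2}{0.35(k-5)}\bigl((k+1)\log|G|+\log(\epsilon^{-1})\bigr),
$$
which is exactly the bound in the theorem. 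There is no real obstacle: once \Cref{c:spectral-gap} is in hand, every step is an application of a textbook mixing inequality plus bookkeeping on constants; the only small choice is to absorb the additive $-\log 2$ and the factor-$\tfrac{1}{2}$ in $\sqrt{|V|}$ into a clean, slightly loose constant, which is what produces the $\tfrac{8}{0.35}\approx 23$ appearing in the informal statement.
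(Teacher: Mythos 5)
Your proposal is correct and follows essentially the same route as the paper: reduce to the mixing of the lazy walk on $\Sigma'_k(G)$ via \Cref{l:acc-conv-to-uniform}, feed the spectral gap from \Cref{c:spectral-gap} into the standard reversible-chain bound, and control $|\Sigma'_k(G)|$ by $|G|^{k+1}$. The only difference is that you write out the $\tfrac{1}{2}\sqrt{|V|}\,e^{-t\lambda/2}$ form of the mixing inequality explicitly where the paper simply cites the textbook bound $t \geq \tfrac{2}{\lambda}\log(|\Sigma'_k(G)|/\epsilon)$; both yield the stated constant.
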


\begin{proof}
Let $\mu_t$ be the distribution of the lazy walk on $\Sigma'_k(G)$ after $t$ steps. \Cref{c:spectral-gap} and the standard bound relating the rate of convergence to the uniform distribution and the spectral gap of $\Sigma'_k(G)$ \cite[Chapter 12]{MR2466937} give:

$$
\|\mu_t - U_G\|_{\mathrm{tv}} < \epsilon \text{ for } t \geq \frac{8k^2}{0.35(k-5)}\log\Big(\frac{|\Sigma'_k(G)|}{\epsilon}\Big).
$$

The theorem follows from \Cref{l:acc-conv-to-uniform} and the inequality $|\Sigma'_k(G)| < |G|^{k+1}$.

\end{proof}

\begin{remark}
A similar argument for $k=5$ gives

$$
\|\nu_t - U_G\|_{\mathrm{tv}} < \epsilon \text{ for } t \geq \frac{8k^2}{1.41}((k+1)\log|G| + \log(\epsilon^{-1})).
$$

\end{remark}

\begin{corollary}
Fix $k \geq 5$. The family of $4k^2$-regular graphs $\Sigma'_k(G)$, indexed by $(G,S_0)$ where $G$ is arbitrary finite group and $S_0$ is a generating $k$-tuple of $G$, is an expander. 
\end{corollary}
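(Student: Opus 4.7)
The plan is to read off the expansion from the uniform spectral-gap bound already proved, since by definition a family of $d$-regular graphs is an expander precisely when their normalized Laplacians have a spectral gap bounded below by a positive constant independent of the graph in the family. The crucial feature of \Cref{c:spectral-gap} is that the lower bound $\frac{0.35(k-5)}{4k^2}$ depends only on $k$ and not on the pair $(G,S_0)$, so it serves directly as the required uniform bound.

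First I would fix $k > 5$ and recall that each $\Sigma'_k(G)$ is $4k^2$-regular, so its normalized Laplacian is $\frac{1}{4k^2}\Delta_{\Sigma'_k(G)}$. Applying \Cref{c:spectral-gap} verbatim gives spectral gap at least $\frac{0.35(k-5)}{4k^2} > 0$, uniformly over all finite groups $G$ and all generating $k$-tuples $S_0$. This is the definition of an expander family.

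For the remaining boundary case $k = 5$, the same argument works, but with the input being the $K = 5$ half of \Cref{t:sos}: since $\Delta_5^2 - 1.41\,\Delta_5 \geq 0$ in $\BB R[\A_5]$, pushing forward through the representation $\rho_G \colon \BB R[\A_5] \to \OP{L}(\BB C^V)$ as in the proof of \Cref{c:spectral-gap} shows $\Delta_{\Sigma'_5(G)}^2 - 1.41\,\Delta_{\Sigma'_5(G)} \geq 0$, hence the spectral gap of $\Sigma'_5(G)$ is at least $\frac{1.41}{100}$, again uniformly in $(G,S_0)$.

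There is no real obstacle: the corollary is essentially a restatement of \Cref{c:spectral-gap} together with its $k=5$ analogue. The only small point worth noting is that one has to verify that the spectral gap statement for the operator $\frac{1}{4k^2}\Delta_{\Sigma'_k(G)}$ acting on all of $\BB C^V$ restricts correctly to the orthogonal complement of the constants (which corresponds to the kernel of the Laplacian on a connected graph); this follows because $\Sigma'_k(G)$ is connected by construction, so the zero-eigenspace of $\Delta_{\Sigma'_k(G)}$ consists precisely of the constant functions, and the nontrivial eigenvalues are bounded below by the quoted number.
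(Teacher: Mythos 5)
Your proof is correct and follows exactly the route the paper intends: the corollary is an immediate consequence of \Cref{c:spectral-gap} (whose bound depends only on $k$, not on $(G,S_0)$) for $k>5$, together with the $K=5$ case of \Cref{t:sos} pushed through $\rho_G$ for the boundary case. Your closing remark about the zero-eigenspace being the constants on the connected graph $\Sigma'_k(G)$ is the right (and only) point needing verification, and it holds since $\Sigma'_k(G)$ is a connected component by construction.
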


\begin{remark}
\Cref{l:induction-element} is true as well for $\OP{LA}_5$ with the same constant, i.e. $\lambda_5 = 1.41$ and appropriately defined operators. \Cref{t:sos} and \Cref{t:rate-of-convergence} remain valid with the same constants as well. The proofs apply verbatim. 
\end{remark}

\section{Alternative proof of property (T)}\label{s:semidirect}

In this complementary paragraph we show that $\A_k$ is a semidirect product and discuss a different argument showing that $\A_k$ has the property (T). However, this argument gives much worse estimates than \Cref{t:rate-of-convergence}.

\begin{lemma}
The group $\A_k$ is isomorphic to $\F_k^2 \rtimes \SAut(\F_k)$, where the action of $\SAut(\F_k)$ on $\F_k^2~=~\F_k \times \F_k$ is the direct product of the canonical action. 
\end{lemma}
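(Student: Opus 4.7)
The plan is to exhibit $\A_k$ explicitly as an internal semidirect product $H \rtimes N$, where $H \leq \A_k$ is the subgroup generated by the $C$-generators and $N \leq \A_k$ is the subgroup generated by the $N$-generators, and then to identify $H$ with $\F_k^2$ and $N$ with $\SAut(\F_k)$ acting on $H$ diagonally.

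First I would describe $H$ concretely. By induction on word length in the $C$-generators, every element of $H$ fixes $x_1,\ldots,x_k$ and sends $x_0$ to some word of the form $u \cdot x_0 \cdot v$ with $u,v \in \F_k$: this holds for each generator $L_{0i}^{\pm}$, $R_{0i}^{\pm}$ and is preserved under composition. Conversely, given $u,v \in \F_k$, writing $u^{-1}$ and $v$ as words in $x_1,\ldots,x_k$ and applying the corresponding $C$-generators produces an automorphism $\psi_{u,v}$ of $\FP$ with $\psi_{u,v}(x_0) = u^{-1} x_0 v$ and $\psi_{u,v}(x_i) = x_i$ for $i \geq 1$. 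Thus $H$ consists precisely of these $\psi_{u,v}$.

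Next I would compute the composition law. Since $\psi_{u_1,v_1}$ fixes every element of $\F_k$, applying it to $\psi_{u_2,v_2}(x_0) = u_2^{-1} x_0 v_2$ gives
$$
\psi_{u_1,v_1}\circ\psi_{u_2,v_2}(x_0) \;=\; u_2^{-1}\,(u_1^{-1}x_0 v_1)\,v_2 \;=\; (u_1u_2)^{-1}\, x_0\, (v_1 v_2),
$$
so $\psi_{u_1,v_1}\circ\psi_{u_2,v_2} = \psi_{u_1u_2,\,v_1v_2}$. Hence $(u,v) \mapsto \psi_{u,v}$ is a group isomorphism $\F_k \times \F_k \xrightarrow{\sim} H$. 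The inversion on the first coordinate is what makes this a genuine direct product rather than a product twisted by the opposite structure on the first factor.

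The subgroup $N$ consists of the standard Nielsen generators of $\SAut(\F_k)$ regarded as automorphisms of $\FP$ fixing $x_0$; the evident homomorphism $\SAut(\F_k) \to N$ is injective because a Nielsen automorphism that acts trivially on $\F_k$ is the identity. So $N \cong \SAut(\F_k)$. Clearly $H \cap N = \{1\}$, since a nontrivial element of $H$ moves $x_0$ whereas every element of $N$ fixes it, and $H$ and $N$ together generate $\A_k$ by definition.

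Finally I would check normality and identify the action. For $\sigma \in N$ viewed as an automorphism of $\FP$ fixing $x_0$,
$$
(\sigma \psi_{u,v}\sigma^{-1})(x_0) \;=\; \sigma(u^{-1}x_0 v) \;=\; \sigma(u)^{-1}\, x_0\, \sigma(v),
$$
while $\sigma \psi_{u,v}\sigma^{-1}$ plainly fixes $x_1,\ldots,x_k$. Thus $\sigma \psi_{u,v}\sigma^{-1} = \psi_{\sigma(u),\sigma(v)}$, which under the isomorphism $H \cong \F_k \times \F_k$ is exactly the direct product of the canonical $\SAut(\F_k)$-action. Combining these facts gives $\A_k \cong \F_k^2 \rtimes \SAut(\F_k)$. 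The one place that requires care, and the only possible pitfall, is the bookkeeping of the composition law in $H$: a naive parametrization $x_0 \mapsto u x_0 v$ multiplies in the opposite order on the first coordinate, and adopting $x_0 \mapsto u^{-1} x_0 v$ is what makes both the isomorphism with $\F_k \times \F_k$ and the diagonal action manifest.
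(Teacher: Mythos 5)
Your proof is correct and takes essentially the same approach as the paper: both identify the normal complement (your $H$; the paper's kernel of the retraction $\A_k \twoheadrightarrow \SAut(\F_k)$ given by restriction to $\F_k$) with $\F_k^2$ via the parametrization $x_0 \mapsto u^{-1}x_0v$, verify the same composition law, and compute the same conjugation formula yielding the diagonal action. The only cosmetic difference is that the paper gets normality for free from the retraction, while you deduce it from the conjugation formula.
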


\begin{proof}
Let $\F_k < \FP$, where $\F_k = \la x_1,\ldots,x_k \ra$.
Note that $N$-generators generate $\SAut(\F_k)$, thus $\SAut(\F_k) < \A_k$.
Moreover, elements of $\A_k$ fix $\F_k$, hence by restricting to this subgroup, we have the retraction:
$$
p \colon \A_k \twoheadrightarrow \SAut(\F_k).
$$

Thus $\A_k$ is isomorphic to $\ker(p) \rtimes \SAut(\F_k)$, where $\SAut(\F_k)$ acts on $\ker(p)$ by conjugation. 
Let $\psi \in \ker(p)$, then $\psi$ fixes all $x_1,\ldots,x_k$ and since it is generated by $C$-generators and $N$-generators, to $x_0$ we can multiply only elements from $\F_k$ on the right and left. Thus there exist unique elements $l_{\psi}$ and $w_{\psi}$ in $\F_k$ such that:  
$$\psi(x_0,x_1,\ldots,x_k) = (l_{\psi}x_0r_{\psi},x_1,\ldots,x_k).$$

The map $j(\psi) = (l_\psi^{-1},r_{\psi})$ is an isomorphism between $\ker(p)$ and $\F_k^2$. Indeed, it is injective and since we have all generators $L^{\pm}_{0i}$ and $R^{\pm}_{0i}$, we can obtain every element in $\F_k^2$. Moreover, since
$$
\psi_1\psi_2(x_0) = \psi_1(l_{\psi_2}x_0r_{\psi_2}) = l_{\psi_2}l_{\psi_1}x_0r_{\psi_1}r_{\psi_2},
$$

we have that $j(\psi_1\psi_2) = (l_{\psi_1}^{-1}l_{\psi_2}^{-1},r_{\psi_1}r_{\psi_2}) = j(\psi_1)j(\psi_2)$, thus $j$ is a homomorphism. It follows that $\A_k$ is isomorphic to $\F_k^2 \rtimes \SAut(\F_k)$.

Now we compute how the conjugation action of $\SAut(\F_k)$ on $\ker(p)$ looks like after identifying $\ker(p)$ with $\F_k^2$ via the isomorphism $j$. Let $\phi \in \SAut(\F_k) < \A_k$ and $\psi \in \ker(p)$ be such that $j(\psi) = (l^{-1},r)$. Note that $\phi(x_0) = x_0$. We have 

\begin{align*}
\phi \psi \phi^{-1} (x_0) & =\phi \psi (x_0)\\
& = \phi (lx_0r)\\
&=\phi(l)x_0\phi(r).
\end{align*}

Thus $j(\phi \psi \phi^{-1}) = (\phi(l^{-1}),\phi(r))$ and the action of $\SAut(\F_k)$ on $\F_k^2$ is the direct product of the canonical action.

\end{proof}

In an analogous way one can show that $\OP{LA}_k$ is isomorphic to $\F_k \rtimes \SAut(\F_k)$, see \Cref{r:only-left-transvections} for the definition of $\OP{LA}_k$.

If one is interested only in the property (T) for $\A_k$ or $\OP{LA}_k$, one could use directly \cite[Proposition 10]{MR4023374}. However, this approach gives much worse estimates of the spectral gaps than what follows from  \Cref{t:rate-of-convergence}. Let us compute these estimates. 

Assume that $G$ is a group and $S$ is a fixed generating set of $G$. The Kazhdan's constant of $(G,S)$ is the largest number $\kappa(G)$ such that 
$$
\max_{s\in S} \|\rho(s)v-v\| \geq \kappa(G)\|v\|,
$$
for every irreducible unitary representation $\rho$ of $G$ and every $v \in \mathcal{H}_\rho$. 

Suppose $k \geq 4$. It follows from \cite[Remark 5.11]{MR4224715} for $k\geq 5$, and from \cite{nitsche2022computerproofspropertyt} for $k=4$, that $\SAut(\F_k)$ has property (T) and the Kazhdan's constant $\kappa(\SAut(\F_k))$ is of order $\frac{1}{\sqrt{k}}$. By \cite[Proposition 10]{MR4023374}, we have that $\kappa(\OP{LA}_k)$ is at least of order $ k^{-1}$. Since $\A_k = \F_k \rtimes \OP{LA}_k$, using again \cite[Proposition 10]{MR4023374}, we have $\kappa(\A_k)$ is at least of order $k^{-1.5}$. Now we have to estimate the spectral gap of $\Sigma'_k(G)$ for $\A_k$ and the spectral gap of an analogous graph for $\OP{LA}_k$. To our knowledge, the best estimate bounding the spectral gap from below by the Kazhdan's constant is given in \cite{PakZuk}. To use it, we need a subgroup of the automorphism group of $\A_k$ that acts on the generators of~$\A_k$. The maximal such a subgroup is $\Phi = \BB Z_2^{k+1} \rtimes S_k$, see \Cref{ss:num-approx} for more details. 

The smallest orbit of the $\Phi$-action on the generators of $\A_k$ has $4k$ elements, hence the constant $\alpha$ from \cite{PakZuk} equals $\alpha(\A_k) = \frac{4k}{4k^2} = \frac{1}{k}$. A similar analysis for $\OP{LA}_k$, where $\Phi = \BB Z_2^k \rtimes S_k$, gives $\alpha(\OP{LA}_k) = \frac{1}{2k-1}$. Thus the lower bound for the spectral gap of $\Sigma'_k(G)$ that we get is at least $\alpha(\A_k)\kappa(\A_k)^2$, which gives the bound of order $k^{-4}$ (note that the Kazhdan's constant in \cite{PakZuk} is the square of the Kazhdan's constant as we defined it). For $\OP{LA}_k$ we have the bound of order $k^{-3}$. These estimates are much worse than what follows from \Cref{t:sos} and what is used in \Cref{t:rate-of-convergence}. The main point of our strategy in this paper is that we do not pass through the Kazhdan's constant, but estimate the spectral gap directly from computer calculations. 
 
\section{Turning computer calculations into rigorous proofs}\label{s:computer-calc}
In this section we describe how we obtained the estimate $\lambda_5=1.41$ in \Cref{l:induction-element} and the analogous result for $\OP{LA}_5$. The description is mostly based on the description from \cite{MR3794918}.

Let $G$ be a group generated by a finite symmetric set $S$ which does not contain elements of order two and let $\Delta$ be the Laplacian of $G$ with respect to $S$. Ozawa showed that $\Delta$ is an \emph{order unit} in the augmentation ideal $I[G]$, meaning that for each $*$-invariant $\xi\in I[G]$ there exists $R_{\xi}\geq 0$ such that $\xi+R_{\xi}\Delta\geq 0$. More precisely, the definition of $\Delta$ being an order unit for $I[G]$ actually requires that $\xi+R_{\xi}\Delta=\sum_i\xi_i^*\xi_i$ for some finite number of $\xi_i$ from $I[G]$. However, since $\Sigma^2I[G]=\Sigma^2\mathbb{R}[G]\cap I[G]$, we may just require $\xi_i\in\mathbb{R}[G]$. 

Let $\xi=\xi^*\in I[G]$. Suppose we want to show that there exists a positive $\lambda$ such that $\xi-\lambda\Delta\geq 0$. We start with estimating numerically $\lambda_0>0$ and a sum of squares decomposition of $\xi-\lambda_0\Delta$. Obviously, we cannot search for the solution in the entire infinite group due to finiteness constraints of computers. The remedy for that is to restrict our attention to some finite subset $E\subset G$ and look for the decompositions
\begin{equation}\label{eq:Rxi-bound}
    \xi-\lambda_0\Delta\approx \sum_i\xi_i^*\xi_i,
\end{equation}
where the support of each $\xi_i$ is contained in $E=\{1,g_1,\ldots,g_m\}$. One typically takes $E$ to be a ball of some small radius. One can then write equation \ref{eq:Rxi-bound} equivalently as
$$
\xi-\lambda_0\Delta\approx\mathbbm{x}^*Q^TQ\mathbbm{x}
$$
for some $m\times m$ real matrix $Q$ and 
$$
\mathbbm{x}=\begin{bmatrix}
    1-g_1\\
    \vdots\\
    1-g_m
\end{bmatrix},
$$
where $*$ denotes this time the composition of the group ring $*$ operation with matrix transposition. 

Put $r=\xi-\lambda_0\Delta-\mathbbm{x}^*Q^TQ\mathbbm{x}$. We have
$$
\xi-(\lambda_0-R_r)\Delta=\mathbbm{x}^*Q^TQ\mathbbm{x}+(r+R_r\Delta).
$$

The latter expression is already a sum of squares which would conclude the proof. However, we have to take care of the following aspects:
\begin{enumerate}
    \item First, we have to ensure that $R_r<\lambda_0$ and so far we have not provided any upper-bound for $R_r$. Fortunately, we have to our disposal the following result of T. Netzer and A. Thom:
\begin{theorem}\label{theorem:netzer_thom}\cite[Lemma 2.1]{MR3359223}
    For any $\xi=\xi^*\in I[G]$, there exists $R_{\xi}\geq 0$ such that $\xi+R_{\xi}\Delta\geq 0$ and:
    $$
    R_{\xi}\leq m^2||\xi||_1,
    $$
    where $||\eta||_1=\sum_g|\xi_g|$ denotes the $\ell_1$-norm of $\eta=\sum_g\eta_gg$ and $m$ is the maximum of the world-length norms with respect to $S$ of $g$ from the support of $\xi$.
\end{theorem}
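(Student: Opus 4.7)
The plan is to combine three ingredients: a telescoping decomposition of $1-g$ along a word representing $g$, a Cauchy--Schwarz style inequality in the group ring, and a symmetrization using $\xi=\xi^*$ together with $\xi\in I[G]$ that rewrites $\xi$ as a signed combination of terms $(1-g)(1-g)^*$. With these in hand, the claimed bound drops out by comparing each such term to $\Delta$.

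First I would establish the key lemma: for every $g\in G$ of word-length $\ell\le m$ with respect to $S$, one has $(1-g)^*(1-g)\le \ell^2\,\Delta$ in $\BB R[G]$ in the sense of sums of squares. Writing $g=s_1\cdots s_\ell$ with $s_i\in S$ and setting $a_i=s_1\cdots s_{i-1}(1-s_i)$, the telescoping identity gives $1-g=\sum_{i=1}^\ell a_i$. A Cauchy--Schwarz inequality in $\BB R[G]$, namely
$$\Big(\sum_{i=1}^\ell a_i\Big)^{\!*}\Big(\sum_{i=1}^\ell a_i\Big)\le \ell\sum_{i=1}^\ell a_i^*a_i,$$
follows from the obvious identity $\sum_{i<j}(a_i-a_j)^*(a_i-a_j)\ge 0$. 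A direct computation shows that $a_i^*a_i=(1-s_i)^*(1-s_i)$, which is a single summand of $\Delta$, so $a_i^*a_i\le\Delta$ for every $i$. Combining yields $(1-g)^*(1-g)\le \ell^2\,\Delta\le m^2\,\Delta$.

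Next I would rewrite $\xi$ in a symmetric form. The augmentation condition $\sum_g\xi_g=0$ gives $\xi=-\sum_g \xi_g(1-g)$. Using $\xi=\xi^*$, i.e.\ $\xi_g=\xi_{g^{-1}}$, together with the identity $(1-g)+(1-g^{-1})=2-g-g^{-1}=(1-g)(1-g)^*$, a routine symmetrization produces
$$\xi \;=\; -\tfrac{1}{2}\sum_{g}\xi_g\,(1-g)(1-g)^*.$$
Now split $\xi_g=\xi_g^+-\xi_g^-$ into positive and negative parts. The contribution of $\xi_g^-$ is already a sum of squares (with nonnegative coefficients), while by the key lemma the contribution of $\xi_g^+$ is dominated by $\tfrac{m^2}{2}\bigl(\sum_g\xi_g^+\bigr)\Delta$. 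Since $\sum_g\xi_g=0$ forces $\sum_g\xi_g^+=\sum_g\xi_g^-=\tfrac{1}{2}\|\xi\|_1$, one may take $R_\xi=\tfrac{m^2}{4}\|\xi\|_1$, which is strictly better than the claimed bound $m^2\|\xi\|_1$.

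The only subtle point is the quadratic factor $\ell^2$ in the key lemma. A naive operator-norm or triangle-inequality estimate gives only a linear-in-$\ell$ bound on $\|1-g\|$, which is insufficient to compare $(1-g)^*(1-g)$ to a multiple of $\Delta$ \emph{as a sum of squares in $\BB R[G]$}, as opposed to merely in operator norm for a particular representation. The correct quadratic factor arises from two successive Cauchy--Schwarz steps --- one to expand the square of the telescoped sum (producing one factor of $\ell$), and one (trivial) to dominate each of the $\ell$ summands by $\Delta$. Everything else is bookkeeping.
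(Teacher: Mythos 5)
Your proof is correct, and it is essentially the standard argument behind the cited result of Netzer and Thom (which the paper quotes without proof): telescoping $1-g$ along a word, the Cauchy--Schwarz inequality $\bigl(\sum_i a_i\bigr)^*\bigl(\sum_i a_i\bigr)\le \ell\sum_i a_i^*a_i$ realized as the sum of squares $\sum_{i<j}(a_i-a_j)^*(a_i-a_j)$, and the symmetrization $\xi=-\tfrac12\sum_g\xi_g(1-g)(1-g)^*$ using $\xi=\xi^*$ and the augmentation condition. Your bookkeeping even yields the sharper constant $R_\xi\le\tfrac{m^2}{4}\|\xi\|_1$, which of course implies the stated bound $R_\xi\le m^2\|\xi\|_1$; note only that the step $a_i^*a_i=(1-s_i)^*(1-s_i)\le\Delta$ uses the standing hypothesis that $S$ is symmetric without elements of order two, so that $\Delta=\sum_{\{s,s^{-1}\}}(1-s)^*(1-s)$ with the sum over one representative per inverse pair.
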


\item In order to ensure the rigor of computations after obtaining the numerical approximations, one performs them in the \emph{interval arithmetic}. The idea is that the coefficients of the group ring elements are no longer ordinary real numbers but small intervals guaranteeing that the actual coefficients lie within the corresponding intervals. All the arithmetic operations preserve that rigor.
\end{enumerate}

All the steps above were implemented in \cite{kaluba_PropertyTjl_2024} and applied before in \cite{MR3794918,MR4023374,MR4224715,MR4912919}.

\subsection{Finding the numerical approximation}\label{ss:num-approx}
In order to obtain numerical approximations of $\lambda$ and $Q$, one applies semi-definite positive programming. We skip the description of that since it is thoroughly described in \cite{MR4023374}. As noted there, one can apply the symmetry of the expression $\Delta^2-\lambda\Delta$ with respect to the generating set given by Nielsen transvections. \emph{Wedderburn decomposition} allows one then to drastically reduce the complexity of the problem. This applies to our case as well. In order to apply this approach, we have to be sure that for our group $G\in\{\OP{A}_k,\OP{LA}_k\}$, the Laplacian $\Delta$ and the finite support set $E$ are $\Phi$-invariant, where $\Phi$ denotes a specific \emph{wreath product}. 
\begin{enumerate}  
    
\item In the case $G=\OP{A}_k$, $\Phi$ is the subgroup of $\mathbb{Z}_2\wr S_{k+1}\cong \mathbb{Z}_2^{k+1}\rtimes S_{k+1}$ generated by the elements of the form $((a_0,\ldots,a_k),\sigma)$ for $\sigma\in S_{k+1}$ fixing $0$ and $S_{k+1}$ denoting the permutation group of the set $\{0,\ldots,k\}$. The action of $\Phi$ on $G$ is defined as follows
\[
((a_0,\ldots,a_{k}),\sigma)T_{i,j}=
\begin{cases}
  (T_{\sigma(i),\sigma(j)})^{\pm a_{\sigma(j)}}, & x_{\sigma(i)}=1 \\
  (T'_{\sigma(i),\sigma(j)})^{\mp a_{\sigma(j)}}, & x_{\sigma(i)}=-1,
\end{cases}
\]
where $T'$ denotes $R$ for $T=L$ and $L$ for $T=R$. The action of an element $((a_0,\ldots,a_{k}),\sigma)$ is precisely the conjugation by the automorphism of the free group $\F_{k+1}=\langle x_0,\ldots,x_k\rangle$ sending $x_i$ to $x_{\sigma(i)}^{a_{\sigma(i)}}$.

 \item In the case of $G=\OP{LA}_k$, $\Phi=\mathbb{Z}_2\wr S_k\cong \mathbb{Z}_2^k\rtimes S_k$, where $S_k$ denotes the permutation group of the set $\{1,\ldots,k\}$. We define the action of $\Phi$ on $T_{i,j}$ by the same formulae as in the case above for $i>0$. For $i=0$ we do the same but we neglect flipping the symbol $T$. This is important since, inheriting directly the conjugation action of the direct product would flip $T$ and we are not allowed to do that due to the lack of the generators $R_{0j}$.
\end{enumerate}
In both cases above, we take $E$ to be some ball of finite radius. It is apparent that $E$ is $\Phi$-invariant.

The above numerical approximations can be obtained in moderate time (in our case not longer than $2$ hours) on a typical desktop computer with $32$ GB of RAM memory . 

\subsection{Replication of the results}
We refer the reader to the GitHub repository with our code \cite{mizerka_pra_complexity_2025} or the Zenodo resource \cite{marcinkowski_mizerka_2025_17981235}. We provide as well the precomputed numerical approximations of $\lambda$ and $Q$. One can use them directly in the certification process -- one can skip then the whole numerical approximation part. The proof remains rigorous, since it is not important for this purpose how actually the approximations of $\lambda$ and $Q$ were obtained.

The Julia \cite{Julia-2017} code we implemented \cite{mizerka_pra_complexity_2025} makes important use (in some cases, indirectly, through other packages) of the following packages: \href{https://github.com/kalmarek/Groups.jl}{Groups.jl} \cite{MR4023374,MR4224715}, \href{https://github.com/JuliaIntervals/IntervalArithmetic.jl}{IntervalArithmetic.jl} \cite{Sanders2023IntervalArithmetic}, \href{https://jump.dev/}{JuMP.jl} \cite{Lubin2023JuMP}, \href{https://github.com/cvxgrp/scs}{SCS} \cite{ODonoghue2016Conic}, \href{https://github.com/JuliaAlgebra/StarAlgebras.jl}{StarAlgebras.jl} \cite{MR4023374,MR4224715}, \href{https://github.com/kalmarek/SymbolicWedderburn.jl}{SymbolicWedderburn.jl} \cite{MR4023374,MR4224715}, and \href{https://github.com/kalmarek/PropertyT.jl}{PropertyT.jl} \cite{kaluba_PropertyTjl_2024}.

\bibliographystyle{abbrvurl}
\bibliography{bib}

@misc{marcinkowski_mizerka_2025_17981235,
  author       = {Marcinkowski, Micha{\l} and Mizerka, Piotr},
  title        = {Replication details for "Sampling elements of a finite group: efficiency of the Product Replacement Algorithm with accumulator"},
  year         = {2025},
  doi          = {10.5281/zenodo.17981235},
  url          = {https://doi.org/10.5281/zenodo.17981235},
  publisher    = {Zenodo}
}

@article{Lubin2023JuMP,
  author  = {Lubin, Miles and Dowson, Oscar and Garcia, Joaquin Diaz and Huchette, Joey and Legat, Beno{\^\i}t and Vielma, Juan Pablo},
  title   = {JuMP 1.0: Recent Improvements to a Modeling Language for Mathematical Optimization},
  journal = {Mathematical Programming Computation},
  year    = {2023},
  doi     = {10.1007/s12532-023-00235-5}
}

@article{ODonoghue2016Conic,
  author  = {O'Donoghue, Brendan and Chu, Eric and Parikh, Neal and Boyd, Stephen},
  title   = {Conic Optimization via Operator Splitting and Homogeneous Self-Dual Embedding},
  journal = {Journal of Optimization Theory and Applications},
  volume  = {169},
  number  = {3},
  pages   = {1042--1068},
  year    = {2016},
  month   = {June},
  doi     = {10.1007/s10957-016-0892-3}
}

@article{Julia-2017,
    title={Julia: A fresh approach to numerical computing},
    author={Bezanson, Jeff and Edelman, Alan and Karpinski, Stefan and Shah, Viral B},
    journal={SIAM {R}eview},
    volume={59},
    number={1},
    pages={65--98},
    year={2017},
    publisher={SIAM},
    doi={10.1137/141000671},
    url={https://epubs.siam.org/doi/10.1137/141000671}
}

@misc{mizerka_pra_complexity_2025,
  author       = {Marcinkowski, Michał and Mizerka, Piotr},
  title        = {pra\_complexity},
  year         = {2025},
  howpublished = {\url{https://github.com/piotrmizerka/pra_complexity}},
  note         = {GitHub repository},
}

@misc{kaluba_PropertyTjl_2024,
  author       = {Kaluba, Marek},
  title        = {{PropertyT.jl}: Sum-of-squares methods in group rings for certifying Kazhdan Property (T)},
  year         = {2024},
  howpublished = {\url{https://github.com/kalmarek/PropertyT.jl}},
}

@book {MR2466937,
    AUTHOR = {Levin, David A. and Peres, Yuval and Wilmer, Elizabeth L.},
     TITLE = {Markov chains and mixing times},
      NOTE = {With a chapter by James G. Propp and David B. Wilson},
 PUBLISHER = {American Mathematical Society, Providence, RI},
      YEAR = {2009},
     PAGES = {xviii+371},
      ISBN = {978-0-8218-4739-8},
   MRCLASS = {60J10 (60-01 60J05 60K35 60K37 68U20 68W20)},
  MRNUMBER = {2466937},
MRREVIEWER = {Olle\ H\"aggstr\"om},
       DOI = {10.1090/mbk/058},
       URL = {https://doi.org/10.1090/mbk/058},
}

@article {MR1356111,
    AUTHOR = {Celler, Frank and Leedham-Green, Charles R. and Murray, Scott
              H. and Niemeyer, Alice C. and O'Brien, E. A.},
     TITLE = {Generating random elements of a finite group},
   JOURNAL = {Comm. Algebra},
  FJOURNAL = {Communications in Algebra},
    VOLUME = {23},
      YEAR = {1995},
    NUMBER = {13},
     PAGES = {4931--4948},
      ISSN = {0092-7872,1532-4125},
   MRCLASS = {20P05 (20D99 68Q20)},
  MRNUMBER = {1356111},
MRREVIEWER = {P.\ P.\ P\'alfy},
       DOI = {10.1080/00927879508825509},
       URL = {https://doi.org/10.1080/00927879508825509},
}

@article {PakZuk,
    AUTHOR = {Pak, Igor and \.{Z}uk, Andrzej},
     TITLE = {On {K}azhdan constants and mixing of random walks},
   JOURNAL = {Int. Math. Res. Not.},
  FJOURNAL = {International Mathematics Research Notices},
      YEAR = {2002},
    NUMBER = {36},
     PAGES = {1891--1905},
      ISSN = {1073-7928,1687-0247},
   MRCLASS = {20F65 (20P05 60G50)},
  MRNUMBER = {1920168},
MRREVIEWER = {Alexander\ Gamburd},
       DOI = {10.1155/S1073792802206017},
       URL = {https://doi.org/10.1155/S1073792802206017},
}

@article {MR4912919,
    AUTHOR = {Kaluba, Marek and Kielak, Dawid},
     TITLE = {Kazhdan constants for {C}hevalley groups over the integers},
   JOURNAL = {Rev. Mat. Iberoam.},
  FJOURNAL = {Revista Matem\'atica Iberoamericana},
    VOLUME = {41},
      YEAR = {2025},
    NUMBER = {4},
     PAGES = {1253--1269},
      ISSN = {0213-2230,2235-0616},
   MRCLASS = {22D55},
  MRNUMBER = {4912919},
       DOI = {10.4171/rmi/1534},
       URL = {https://doi.org/10.4171/rmi/1534},
}

@misc{nitsche2022computerproofspropertyt,
      title={Computer proofs for {P}roperty ({T}), and {SDP} duality}, 
      author={Martin Nitsche},
      year={2022},
      eprint={2009.05134},
      archivePrefix={arXiv},
      primaryClass={math.GR},
      howpublished={\url{https://arxiv.org/abs/2009.05134}}, 
}

@article {MR3427595,
    AUTHOR = {Ozawa, Narutaka},
     TITLE = {Noncommutative real algebraic geometry of {K}azhdan's property
              ({T})},
   JOURNAL = {J. Inst. Math. Jussieu},
  FJOURNAL = {Journal of the Institute of Mathematics of Jussieu. JIMJ.
              Journal de l'Institut de Math\'ematiques de Jussieu},
    VOLUME = {15},
      YEAR = {2016},
    NUMBER = {1},
     PAGES = {85--90},
      ISSN = {1474-7480,1475-3030},
   MRCLASS = {22D10 (20F10 22D15 46L89)},
  MRNUMBER = {3427595},
MRREVIEWER = {Alain\ Valette},
       DOI = {10.1017/S1474748014000309},
       URL = {https://doi.org/10.1017/S1474748014000309},
}

@inproceedings{10.1145/96877.96918,
author = {Cooperman, G. and Finkelstein, L. and Sarawagi, N.},
title = {A random base change algorithm for permutation groups},
year = {1990},
isbn = {0201548925},
publisher = {Association for Computing Machinery},
address = {New York, NY, USA},
url = {https://doi.org/10.1145/96877.96918},
doi = {10.1145/96877.96918},
pages = {161–168},
numpages = {8},
location = {Tokyo, Japan},
series = {ISSAC '90}
}

@article{10.1112/plms/s3-65.3.555,
    author = {Neumann, Peter M. and Praeger, Cheryl E.},
    title = {A Recognition Algorithm for Special Linear Groups},
    journal = {Proceedings of the London Mathematical Society},
    volume = {s3-65},
    number = {3},
    pages = {555-603},
    year = {1992},
    month = {11},
    issn = {0024-6115},
    doi = {10.1112/plms/s3-65.3.555},
    url = {https://doi.org/10.1112/plms/s3-65.3.555},
    eprint = {https://academic.oup.com/plms/article-pdf/s3-65/3/555/4456828/s3-65-3-555.pdf},
}

@inproceedings{10.1145/103418.103440,
author = {Babai, L\'{a}szl\'{o}},
title = {Local expansion of vertex-transitive graphs and random generation in finite groups},
year = {1991},
isbn = {0897913973},
publisher = {Association for Computing Machinery},
address = {New York, NY, USA},
url = {https://doi.org/10.1145/103418.103440},
doi = {10.1145/103418.103440},
booktitle = {Proceedings of the Twenty-Third Annual ACM Symposium on Theory of Computing},
pages = {164–174},
numpages = {11},
location = {New Orleans, Louisiana, USA},
series = {STOC '91}
}

@article{Dixon2008,
author = {Dixon, John D.},
journal = {The Electronic Journal of Combinatorics [electronic only]},
keywords = {finite groups; random elements; algorithms; random generators of groups; probability distributions},
language = {eng},
number = {1},
pages = {Research Paper R94, 13 p.-Research Paper R94, 13 p.},
publisher = {Prof. André Kündgen, Deptartment of Mathematics, California State University San Marcos, San Marcos},
title = {Generating random elements in finite groups.},
url = {http://eudml.org/doc/130477},
volume = {15},
year = {2008},
}

@article {MR1112272,
    AUTHOR = {Knuth, Donald E.},
     TITLE = {Efficient representation of perm groups},
   JOURNAL = {Combinatorica},
  FJOURNAL = {Combinatorica. An International Journal on Combinatorics and
              the Theory of Computing},
    VOLUME = {11},
      YEAR = {1991},
    NUMBER = {1},
     PAGES = {33--43},
      ISSN = {0209-9683},
   MRCLASS = {20B40 (20-04 68Q25)},
  MRNUMBER = {1112272},
MRREVIEWER = {W.\ M.\ Kantor},
       DOI = {10.1007/BF01375471},
       URL = {https://doi.org/10.1007/BF01375471},
}

@book{Seress_2003, 
place={Cambridge}, 
series={Cambridge Tracts in Mathematics}, title={Permutation Group Algorithms}, publisher={Cambridge University Press}, author={Seress, {\'A}kos}, year={2003}, collection={Cambridge Tracts in Mathematics}
}

@software{Sanders2023IntervalArithmetic,
  author  = {Sanders, D. P. and Benet, L. and Ferranti, L. and Agarwal, K. and Richard, B.
             and Grawitter, J. and Gupta, E. and Forets, M. and Herbst, M. F.
             and yashrajgupta and Hanson, E. and van Dyk, B. and Rackauckas, C.
             and Vasani, R. and Miclu{\c{t}}a-C{\^a}mpeanu, S. and Olver, S.
             and Koolen, T. and Wormell, C. and Karrasch, D. and Widmann, D.
             and V{\'a}zquez, F. A. and Dalle, G. and Sarnoff, J. and TagBot, J.
             and O'Bryant, K. and Carlsson, K. and Piibeleht, M. and Giordano, M.
             and Zhang, Q. and Deits, R.},
  title   = {{JuliaIntervals/IntervalArithmetic.jl}: v0.20.9},
  year    = {2023},
  publisher = {Zenodo},
  version = {0.20.9},
  doi     = {10.5281/zenodo.8037734},
  url     = {https://doi.org/10.5281/zenodo.8037734}
}

@article {MR1815215,
    AUTHOR = {Lubotzky, Alexander and Pak, Igor},
     TITLE = {The product replacement algorithm and {K}azhdan's property
              ({T})},
   JOURNAL = {J. Amer. Math. Soc.},
  FJOURNAL = {Journal of the American Mathematical Society},
    VOLUME = {14},
      YEAR = {2001},
    NUMBER = {2},
     PAGES = {347--363},
      ISSN = {0894-0347,1088-6834},
   MRCLASS = {60B15 (05C25 22D10 60J10)},
  MRNUMBER = {1815215},
MRREVIEWER = {Piotr\ Graczyk},
       DOI = {10.1090/S0894-0347-00-00356-8},
       URL = {https://doi.org/10.1090/S0894-0347-00-00356-8},
}

@article {MR4224715,
    AUTHOR = {Kaluba, Marek and Kielak, Dawid and Nowak, Piotr W.},
     TITLE = {On property ({T}) for {$\operatorname{Aut}(F_n)$} and {$\operatorname{SL}_n(\Bbb{Z})$}},
   JOURNAL = {Ann. of Math. (2)},
  FJOURNAL = {Annals of Mathematics. Second Series},
    VOLUME = {193},
      YEAR = {2021},
    NUMBER = {2},
     PAGES = {539--562},
      ISSN = {0003-486X,1939-8980},
   MRCLASS = {22D55 (20F28)},
  MRNUMBER = {4224715},
MRREVIEWER = {Marco\ Trombetti},
       DOI = {10.4007/annals.2021.193.2.3},
       URL = {https://doi.org/10.4007/annals.2021.193.2.3},
}

@article {MR3794918,
    AUTHOR = {Kaluba, Marek and Nowak, Piotr W.},
     TITLE = {Certifying numerical estimates of spectral gaps},
   JOURNAL = {Groups Complex. Cryptol.},
  FJOURNAL = {Groups. Complexity. Cryptology},
    VOLUME = {10},
      YEAR = {2018},
    NUMBER = {1},
     PAGES = {33--41},
      ISSN = {1867-1144,1869-6104},
   MRCLASS = {22D10 (20C07 20C40 94A60)},
  MRNUMBER = {3794918},
       DOI = {10.1515/gcc-2018-0004},
       URL = {https://doi.org/10.1515/gcc-2018-0004},
}

@article {MR3359223,
    AUTHOR = {Netzer, Tim and Thom, Andreas},
     TITLE = {Kazhdan's property ({T}) via semidefinite optimization},
   JOURNAL = {Exp. Math.},
  FJOURNAL = {Experimental Mathematics},
    VOLUME = {24},
      YEAR = {2015},
    NUMBER = {3},
     PAGES = {371--374},
      ISSN = {1058-6458,1944-950X},
   MRCLASS = {16S34 (20C07 22D10)},
  MRNUMBER = {3359223},
       DOI = {10.1080/10586458.2014.999149},
       URL = {https://doi.org/10.1080/10586458.2014.999149},
}

@article {MR4709065,
    AUTHOR = {Leedham-Green, C. R.},
     TITLE = {On a variant of the product replacement algorithm},
   JOURNAL = {Glasg. Math. J.},
  FJOURNAL = {Glasgow Mathematical Journal},
    VOLUME = {66},
      YEAR = {2024},
    NUMBER = {1},
     PAGES = {221--228},
      ISSN = {0017-0895,1469-509X},
   MRCLASS = {20P05 (62M05)},
  MRNUMBER = {4709065},
       DOI = {10.1017/s0017089523000435},
       URL = {https://doi.org/10.1017/s0017089523000435},
}

@article {MR4023374,
    AUTHOR = {Kaluba, Marek and Nowak, Piotr W. and Ozawa, Narutaka},
     TITLE = {{${\rm Aut}(\Bbb F_5)$} has property {$(T)$}},
   JOURNAL = {Math. Ann.},
  FJOURNAL = {Mathematische Annalen},
    VOLUME = {375},
      YEAR = {2019},
    NUMBER = {3-4},
     PAGES = {1169--1191},
      ISSN = {0025-5831,1432-1807},
   MRCLASS = {20F28 (20C15)},
  MRNUMBER = {4023374},
MRREVIEWER = {Fausto\ De Mari},
       DOI = {10.1007/s00208-019-01874-9},
       URL = {https://doi.org/10.1007/s00208-019-01874-9},
}

@incollection {MR1829489,
    AUTHOR = {Pak, Igor},
     TITLE = {What do we know about the product replacement algorithm?},
 BOOKTITLE = {Groups and computation, {III} ({C}olumbus, {OH}, 1999)},
    SERIES = {Ohio State Univ. Math. Res. Inst. Publ.},
    VOLUME = {8},
     PAGES = {301--347},
 PUBLISHER = {de Gruyter, Berlin},
      YEAR = {2001},
      ISBN = {3-11-016721-2},
   MRCLASS = {20P05 (20F69 68W30)},
  MRNUMBER = {1829489},
MRREVIEWER = {Tatiana\ Smirnova-Nagnibeda},
}

@incollection {MR2053017,
    AUTHOR = {Babai, L\'aszl\'o{} and Pak, Igor},
     TITLE = {Strong bias of group generators: an obstacle to the ``product
              replacement algorithm''},
      NOTE = {SODA 2000 special issue},
   JOURNAL = {J. Algorithms},
  FJOURNAL = {Journal of Algorithms. Cognition, Informatics and Logic},
    VOLUME = {50},
      YEAR = {2004},
    NUMBER = {2},
     PAGES = {215--231},
      ISSN = {0196-6774},
   MRCLASS = {60B15 (20F99)},
  MRNUMBER = {2053017},
MRREVIEWER = {C.\ R. E. Raja},
       DOI = {10.1016/S0196-6774(03)00091-9},
       URL = {https://doi.org/10.1016/S0196-6774(03)00091-9},
}

@incollection {MR1929718,
    AUTHOR = {Leedham-Green, C. R. and Murray, Scott H.},
     TITLE = {Variants of product replacement},
 BOOKTITLE = {Computational and statistical group theory ({L}as {V}egas,
              {NV}/{H}oboken, {NJ}, 2001)},
    SERIES = {Contemp. Math.},
    VOLUME = {298},
     PAGES = {97--104},
 PUBLISHER = {Amer. Math. Soc., Providence, RI},
      YEAR = {2002},
      ISBN = {0-8218-3158-5},
   MRCLASS = {20-04 (60J10 68W20 68W30)},
  MRNUMBER = {1929718},
MRREVIEWER = {Burkhard\ K.\ H\"ofling},
       DOI = {10.1090/conm/298/05116},
       URL = {https://doi.org/10.1090/conm/298/05116},
}

\end{document}